\newtheorem{propo}{Proposition}
\newtheorem{prop}[propo]{Proposition}
\newtheorem{lem}[propo]{Lemma}
\newtheorem{cor}[propo]{Corollary}
\newtheorem{thm}[propo]{Theorem}
\newtheorem{theorem}[propo]{Theorem}
\newtheorem{question}[propo]{Question}
\newcommand{\Aut}{{\mathrm {Aut}}}
\newcommand{\Hom}{{\mathrm {Hom}}}
\newcommand{\tr}{{\mathrm {tr}}}
\newcommand{\Spec}{{\mathrm {Spec}}}
\newcommand{\CC}{{\mathbb C}}
\newcommand{\RR}{{\mathbb R}}
\newcommand{\QQ}{{\mathbb Q}}
\newcommand{\ZZ}{{\mathbb Z}}
\newcommand{\PP}{{\mathbb P}}
\newcommand{\PGL}{{\mathrm{PGL}}}
\newcommand{\GL}{{\mathrm{GL}}}
\newcommand{\FF}{{\mathbb F}}
\renewcommand{\AA}{{\mathbb A}}
\newcommand{\gam}{\gamma}
\newcommand\SL{\mathrm{SL}}
\newcommand{\Z}{\mathbb{Z}}
\DeclareMathOperator{\PSL}{PSL}
\begin{document}

\title[Character varieties and products of trees]
{Character varieties and actions on products of trees}

\author[D. Fisher]{David Fisher}
\address{Department of Mathematics\\
    Indiana University \\
    Bloomington, IN 47405\\
    U.S.A.}
\email{fisherdm@indiana.edu}

\author[M. Larsen]{Michael Larsen}
\address{Department of Mathematics\\
    Indiana University \\
    Bloomington, IN 47405\\
    U.S.A.}
\email{mjlarsen@indiana.edu}

\author[R. Spatzier]{Ralf Spatzier}
\address{Department of Mathematics\\
    University of Michigan \\
    Ann Arbor, MI 48109\\
    U.S.A.}
\email{spatzier@umich.edu}

\author[M. Stover]{Matthew Stover}
\address{Department of Mathematics\\
    Temple University \\
    Philadelphia, PA 19122\\
    U.S.A.}
\email{mstover@temple.edu}

\thanks{DF was partially supported by NSF Grant DMS-1308291. ML was partially supported by NSF Grant DMS-1401419. MS was supported by NSF Grant DMS-1361000, and also acknowledges support from U.S. National Science Foundation grants DMS 1107452, 1107263, 1107367 "RNMS: Geometric structures And Representation varieties" (the GEAR Network). RS was partially supported by NSF Grant DMS- 1307164.  This project was begun at MSRI during the special semester on Dynamics on Moduli Spaces of Geometric Structures, the authors thank the Institute for its support and hospitality.}
\maketitle

\section{Introduction}\label{sec:Intro}

In this note we address a question that arose during the MSRI special semester on Dynamics on Moduli Spaces of Geometric Structures.

\begin{question}\label{qtn:DFRepn}
Let $\Sigma_g$ be a surface group of genus $g \ge 2$. Is there a discrete and faithful representation of $\Sigma_g$ into $\Aut(Y)$ for $Y$ a locally compact Euclidean building. Can we take $Y$ to be a finite product of bounded valence trees?
\end{question}


The question was motivated by recent results in the theory of Anosov representations \cite{KLP, GGKW} which we discuss in somewhat more detail later in this introduction.  We note that producing a free action on a product of infinite-valence trees is an easy exercise in Bass--Serre theory. Indeed, one selects the actions associated with the splittings arising from a family of essential simple closed curves that fill the surface. Moreover, it is easy to use the fact that surface groups are fully residually free to produce such an action on an infinite product of finite-valence trees.

We approach only the special case of Question \ref{qtn:DFRepn} for  products of trees and from two distinct directions.  First we attempt a construction of an action via arithmetic methods, in particular using actions on trees coming from representations into algebraic groups over fields of positive characteristic. This approach may also be interesting for building linear representations of other groups in other contexts. Second, we prove some results indicating that any positive answer to the question must be given by a highly irreducible action.  These irreducibility results obstruct some avenues for building the representations by methods of geometric topology.

Our approach to building examples leads us to:

\begin{question}\label{qtn:GlobalQtn}
Let $\Sigma_g$ be the fundamental group of a closed oriented surface of genus $g \ge 2$. Does there exist a faithful representation of $\Sigma_g$ into $\PGL_2(K)$ for some global field $K$ of characteristic $p > 0$?
\end{question}

We show that a positive answer to Question \ref{qtn:GlobalQtn} implies a positive answer to Question \ref{qtn:DFRepn}.
This is probably known, but we include a proof for the benefit of the reader.

\begin{thm}\label{thm:DiscreteFaithful}
Suppose $\Gamma$ is a finitely generated group that admits a faithful representation into $\PGL_2(K)$ for $K$ a global field of characteristic $p > 0$. Then $\Gamma$ admits a discrete and faithful representation into $\Aut(Y)$ for $Y$ a product of finite-valence trees.
\end{thm}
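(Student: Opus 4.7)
The plan is to invoke Bruhat--Tits theory at the non-Archimedean places of $K$ together with the classical discreteness of $S$-integers in the product of completions. First, I would replace $\rho$ by a version taking values in $\PGL_2(\mathcal{O}_S)$ for a suitable finite set of places $S$: choose a finite generating set $\gamma_1,\dots,\gamma_n$ for $\Gamma$, lift each $\rho(\gamma_i)$ to a matrix $g_i \in \GL_2(K)$, and rescale so that each $g_i$ has entries in $K$. Only finitely many places of $K$ appear as poles of the entries of the $g_i$ or as zeros or poles of the determinants; collecting all of them into a finite set $S$ yields $\rho(\Gamma) \subset \PGL_2(\mathcal{O}_S)$.

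Next I would use the standard arithmetic fact that the diagonal map $\mathcal{O}_S \hookrightarrow \prod_{v \in S} K_v$ is discrete (for $S$ nonempty, this follows directly from the product formula: an element of $\mathcal{O}_S$ which is small at every $v \in S$ and $v$-integral outside $S$ must be zero). Hence the induced embedding
\[
  \PGL_2(\mathcal{O}_S) \hookrightarrow \prod_{v \in S} \PGL_2(K_v)
\]
is discrete. Now for each $v\in S$ the group $\PGL_2(K_v)$ acts on its Bruhat--Tits tree $T_v$, a regular tree of valence $q_v+1$, where $q_v$ is the cardinality of the residue field at $v$. Because $K$ is a \emph{global} field of positive characteristic, every residue field is finite, so each $T_v$ has finite valence; moreover the action of $\PGL_2(K_v)$ on $T_v$ is faithful and vertex stabilizers are maximal compact subgroups.

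Setting $Y = \prod_{v\in S} T_v$, the composition
\[
  \Gamma \hookrightarrow \PGL_2(K) \hookrightarrow \prod_{v \in S}\PGL_2(K_v) \to \Aut(Y)
\]
provides the desired representation. Faithfulness is immediate: $K \hookrightarrow K_v$ is injective, hence so is $\PGL_2(K) \to \PGL_2(K_v)$, and the $\PGL_2(K_v)$-action on $T_v$ is faithful. For discreteness, note that the stabilizer in $\Gamma$ of any vertex $y = (y_v)$ of $Y$ is $\Gamma \cap \prod_v \mathrm{Stab}_{\PGL_2(K_v)}(y_v)$, an intersection of a discrete subgroup of $\prod_v \PGL_2(K_v)$ with a compact subgroup, and therefore finite. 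Since $Y$ is a locally finite simplicial complex, finite vertex stabilizers are equivalent to discreteness in $\Aut(Y)$ with the compact-open topology. The only technical care required is in the initial reduction to $\PGL_2(\mathcal{O}_S)$ and in invoking faithfulness of the tree action; neither is serious, and the core arithmetic input—discreteness of $\mathcal{O}_S$ in $\prod_{v\in S} K_v$—is entirely standard.
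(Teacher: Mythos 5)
Your proof is correct and follows essentially the same route as the paper: use finite generation to confine the image to $\PGL_2(\mathcal{O}_S)$ for a finite nonempty set $S$ of places, invoke arithmetic discreteness of $\Gamma$ in $\prod_{v\in S}\PGL_2(K_v)$, and then act on the product of the Bruhat--Tits trees $T_v$, which have finite valence because the residue fields of a global field are finite. The paper packages the discreteness step through the diagonal embedding of $K$ in its adele ring rather than through the product formula for $\mathcal{O}_S$, but these are interchangeable formulations of the same arithmetic input.
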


A result of this kind with $K$ of characteristic zero was famously exploited by Bass \cite{Bass} to show that lattices in $\SL_2(\CC)$ with a nonintegral trace admit nontrivial actions on a tree. He then used this to show that the associated hyperbolic $3$-manifold is Haken. The analogous result in characteristic zero is that a finitely generated subgroup of $\SL_2(K)$, with $K$ a number field, admits a discrete action on a finite product of hyperbolic planes, hyperbolic $3$-spaces, and trees (with at least one of these sets nonempty).

Our first approach to Question \ref{qtn:GlobalQtn} can also be thought of as a method for proving the linearity of certain finitely generated groups $\Gamma$, which we illustrate in our study of surface group representations into linear groups in characteristic $p > 0$.  This method may be of interest in other contexts, as it uses an unusual variant of ping pong for groups generated by torsion elements.  For example, we obtain:

\begin{thm}\label{thm:MainDim2}
For every $p \ge 5$ and $g \ge 2$, there exists a finite extension $K$ of $\FF_p(x, y)$ and a faithful representation from $\Sigma_g$ into $\PGL_2(K)$.
\end{thm}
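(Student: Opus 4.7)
The plan is to reduce to the case $g = 2$, embed $\Sigma_2$ as a torsion-free finite-index subgroup of a cocompact Fuchsian group $\Gamma$ whose torsion generators include an element of order $p$, and then build a faithful representation $\Gamma \to \PGL_2(K)$ by a ping-pong argument on the Bruhat--Tits tree of a non-archimedean completion of $K$. The reduction is immediate: every $\Sigma_g$ with $g \geq 2$ is a degree-$(g-1)$ unbranched cover of $\Sigma_2$, hence $\Sigma_g$ embeds as a finite-index subgroup of $\Sigma_2$, so it suffices to realize $\Sigma_2$ faithfully. For each $p \geq 5$, one chooses a Fuchsian signature (for instance a triangle group $\Delta(p, q, r)$ with $1/p + 1/q + 1/r < 1$, or a signature with more exponents) so that $\Sigma_2$ sits inside $\Gamma$ as a torsion-free normal subgroup of finite index; existence of such a subgroup follows from Selberg's lemma and Riemann--Hurwitz arithmetic, passing through a larger finite quotient if required to match Euler characteristics.

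For the matrices, take $A = \begin{pmatrix} 1 & 1 \\ 0 & 1 \end{pmatrix} \in \PGL_2(\FF_p)$; this unipotent matrix has order $p$ in characteristic $p$. The remaining torsion generators of $\Gamma$ are realized by matrices $B, C, \ldots \in \PGL_2(\FF_p(x, y))$ whose traces are prescribed using the two parameters $x$ and $y$ to force the required torsion orders and to satisfy the defining product relation of $\Gamma$ via an explicit trace computation. Two transcendental parameters suffice because fixing the conjugacy class of each torsion generator is a single trace condition each, and the product relation is the one remaining constraint; a finite extension $K/\FF_p(x, y)$ may be required to adjoin roots of unity appearing as eigenvalues. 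This produces a homomorphism $\pi \colon \Gamma \to \langle A, B, \ldots \rangle \leq \PGL_2(K)$.

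The technical core is to verify that $\pi$ is injective. One passes to the completion $K_v$ at a well-chosen place $v$ of $K$, so that $A, B, \ldots$ act elliptically on the Bruhat--Tits tree $\mathcal{T}_v$ of $\PGL_2(K_v)$, fixing distinct vertices $u_A, u_B, \ldots$; distinctness can be arranged because the matrix entries depend genuinely on both independent parameters. A ping-pong argument for the cyclic groups generated by these torsion elements, operating on the edges of $\mathcal{T}_v$ incident to the fixed vertices (equivalently, on the ends of $\mathcal{T}_v$ accessed from them), is then set up to show that the combinatorial dynamics realize precisely the Bass--Serre tree of $\Gamma$, so every relation satisfied by $A, B, \ldots$ in $\PGL_2(K)$ is a consequence of the defining relations of $\Gamma$. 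Restricting the resulting faithful representation of $\Gamma$ first to $\Sigma_2 \leq \Gamma$ and then to $\Sigma_g \leq \Sigma_2$ completes the construction.

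The principal difficulty is the ping pong with torsion generators. Classical ping pong for free products uses hyperbolic generators whose attracting/repelling dynamics partition disjoint ping-pong sets on the boundary; torsion elements in $\PGL_2(K_v)$ are elliptic and merely permute the finite set of edges incident to their fixed vertices. One must instead set up ping pong on refined subsets of the links at these fixed vertices, verifying that the combined permutation dynamics of the cyclic groups $\langle A \rangle, \langle B \rangle, \ldots$ realize exactly the amalgamated-product-with-relation structure of $\Gamma$ and that no extra identifications appear. This refined scheme is the "unusual ping pong for groups generated by torsion elements" announced in the introduction, and constitutes the main technical content of the proof.
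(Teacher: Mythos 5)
Your reduction to $g=2$ and the idea of embedding $\Sigma_2$ with finite index in a cocompact Fuchsian group with torsion, then producing a two-parameter family of representations, matches the paper's skeleton (the paper uses the signature $(0;3,2,2,3)$ group $\Delta$, whose $\PGL_2$-character variety has a $2$-dimensional component birational to an explicit cubic surface; note that a triangle group, your first suggestion, is rigid and has a $0$-dimensional character variety, so it cannot yield transcendence degree $2$). But your faithfulness argument is where the proposal breaks down, and the gap is not merely technical. You propose to pass to a completion $K_v$ and run ping-pong on the Bruhat--Tits tree $\mathcal{T}_v$ so that ``the combinatorial dynamics realize precisely the Bass--Serre tree of $\Gamma$.'' Any such scheme certifies not just faithfulness but discreteness of the action of $\Gamma$ on a single tree; then the torsion-free finite-index subgroup $\Sigma_2\le\Gamma$ would act discretely, faithfully, and freely on a tree and hence be free by Serre's theorem, a contradiction. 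This is exactly the content of Theorem \ref{theorem:twotrees} of this paper: every faithful projection of a discrete faithful action of a non-free torsion-free hyperbolic group onto a tree factor is \emph{indiscrete}. So no partition-of-the-tree ping-pong for $\Gamma$ itself can succeed. The ``unusual ping pong for torsion generators'' advertised in the introduction is not an argument about $\Gamma$ on a Bruhat--Tits tree; it is Hausdorff's argument (Proposition \ref{Hausdorff}) embedding the \emph{virtually free} group $\Xi=\ZZ/3*\ZZ/2\cong\PSL_2(\ZZ)$ into $\PGL_2(\FF_p(t))$ via its action on $\PP^1(\FF_p(t))$ and the reduction map to $\PP^1(\FF_p)$.

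The paper's actual faithfulness mechanism is a specialization argument on the character variety (Proposition \ref{prop:GLinear}): if $\delta$ lies in the kernel of the generic representation, then $\delta$ becomes unipotent, hence of $p$-power order, under \emph{every} representation on that component; faithfulness follows once, for each nontrivial $\delta$, one exhibits a single point of the component where the image of $\delta$ is not of $p$-power order. These points are supplied by showing $\Delta$ is residually $\Xi$ (Proposition \ref{Residual}, via Dehn-twisted projections from the amalgam $\Delta=\Xi*_{\ZZ}\Xi$) together with Hausdorff's embedding of $\Xi$. This also explains why your choice of a generator of order $p$ realized by the unipotent $\left(\begin{smallmatrix}1&1\\0&1\end{smallmatrix}\right)$ runs against the grain: the method requires $\Delta$ to have \emph{no} $p$-torsion (the orders $3,2,2,3$ are prime to $p\ge 5$), precisely so that kernel elements can be detected by the failure of their images to have $p$-power order. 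To repair your proposal you would need to replace the tree ping-pong for $\Gamma$ by a detection argument of this specialization type (or some other certificate of faithfulness that does not entail discreteness on a single tree).
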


Our techniques also allow us to prove the following.

\begin{thm}
\label{main}
Let $\Sigma_g$ be the fundamental group of a closed Riemann surface of genus $g$. If $p\ge 5$ and $g\ge 2$, for every field $K$ of characteristic $p$ and transcendence degree $r\ge 2$, there exists $n$ such that $\GL_n(K)$ contains
a subgroup isomorphic to $\Sigma_g$.
\end{thm}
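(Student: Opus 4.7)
My plan is to deduce the theorem from Theorem \ref{thm:MainDim2} by a two-step embedding argument. The key observation is that any such $K$ automatically contains an isomorphic copy of $F := \FF_p(x,y)$, while the finite extension $L/F$ supplied by Theorem \ref{thm:MainDim2} can be absorbed by passing to a larger matrix size via Weil restriction of scalars.

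Concretely, I first apply Theorem \ref{thm:MainDim2} to produce a finite extension $L/F$ of some degree $d$ together with a faithful representation $\rho : \Sigma_g \hookrightarrow \PGL_2(L)$. Since $p \ne 2$, the adjoint action of $\PGL_2$ on its Lie algebra gives a faithful representation $\PGL_2 \hookrightarrow \GL(\mathfrak{sl}_2) \cong \GL_3$ defined over any base field, and composing with $\rho$ yields $\Sigma_g \hookrightarrow \GL_3(L)$. Next, choosing an $F$-basis of $L$ identifies $L^3$ with $F^{3d}$, and the $F$-linear action of $\GL_3(L)$ on $L^3$ is faithful, providing an embedding $\GL_3(L) \hookrightarrow \GL_{3d}(F)$.

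To finish, I use the hypothesis $\mathrm{trdeg}_{\FF_p}(K) \ge 2$ to choose $\FF_p$-algebraically independent elements $a, b \in K$ and define an embedding $F = \FF_p(x,y) \hookrightarrow K$ via $x \mapsto a$, $y \mapsto b$. This induces $\GL_{3d}(F) \hookrightarrow \GL_{3d}(K)$, and composing the entire chain yields the desired faithful homomorphism $\Sigma_g \hookrightarrow \GL_n(K)$ with $n = 3d$. I do not foresee any substantive obstacle: each step is injective by construction, and the only ingredients beyond Theorem \ref{thm:MainDim2} are the faithfulness of the adjoint representation of $\PGL_2$ in characteristic $\ne 2$ and the standard Weil restriction of scalars for matrix groups. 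In particular, the real content of the theorem lies in Theorem \ref{thm:MainDim2}, and this reduction is almost formal.
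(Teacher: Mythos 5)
Your reduction is correct and is essentially the route the paper takes: the proof of Corollary~\ref{LinearB} performs exactly your adjoint-representation and restriction-of-scalars steps to obtain linearity over $\FF_p(x_1,x_2)$, and the final embedding of $\FF_p(x,y)$ into an arbitrary $K$ of transcendence degree $\ge 2$ is the (implicit) last step already noted in the introduction. The only cosmetic difference is that you take Theorem~\ref{thm:MainDim2} as a black box, whereas the paper assembles the same conclusion directly from Corollary~\ref{LinearB}, Proposition~\ref{Residual}, and Proposition~\ref{Hausdorff}.
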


Proving linearity results in sufficiently high transcendence degree by appealing to a universal representation was suggested to us by the construction of ``strongly dense'' free subgroups by Breuillard, Green, Guralnick, and Tao \cite{BGGT}. It is also reminiscent of the use of ``tautological representations'' by Culler and Shalen in their study of essential surfaces in knot complements \cite{CS}. The general method is as follows.

We begin by embedding $\Gamma$ into a (possibly) larger group $\Delta$, also finitely generated. Fix an adjoint simple group $G$ over $\FF_p$ and suppose that there exists an irreducible component $X$ of the character variety $\Hom(\Delta,G)/G$ for which the associated representation is generically Zariski-dense. If $K$ is the function field of $X$, then there exists a finite extension $L / K$ and a homomorphism from $\Gamma$ to $G(L)$
that represents the generic point of $X$.
Composing with the adjoint representation of $G$, we obtain a linear representation of $\Delta$, and therefore $\Gamma$, over $L$.

It remains to consider whether or not this homomorphism is faithful. To prove this, we require a subgroup $\Xi\subset G(K)$ such that for every $\delta\in \Delta\setminus \{1\}$ there exists a homomorphism $\psi\colon\Delta\to \Xi$ such that $\delta\not\in \ker \phi$. Regarding
$\psi$ as a $G$-representation of $\Delta$, it determines a point on $X$. In other words, $\Delta$ must be ``residually $\Xi$''. This implies linearity over $\FF_p(x_1,\ldots,x_r)$, where $r := \dim X$, and therefore over every characteristic $p$ field of transcendence degree $\ge r$.

For our application, $\Gamma$ will be the surface group $\Sigma_g$. Without loss of generality we can replace $\Gamma$ with a cocompact Fuchsian group $\Delta$ that contains $\Sigma_2$ (and hence $\Sigma_g$ for all $g \ge 2$). Fix $p$, and let $G=\PGL_2$. The point of introducing a group $\Delta$ which contains $\Gamma$, instead of working directly with $\Gamma$ itself, is that we can find an appropriate cocompact Fuchsian group $\Delta$ with a $\PGL_2$-character variety of dimension $2$, whereas the character variety of $\Sigma_g$ is
$6(g-1)$-dimensional. Since this dimension is the transcendence degree of the function field $K$ of our component $X$, this replacement is key to finding representations with $K$ of transcendence degree $2$ over $\FF_p$.

There do exist choices of $\Delta$ having $\PGL_2$-representation variety of dimension $1$, for example nonorientable Fuchsian groups generated by reflections in the sides of a quadrilateral. However, we could not succeed in finding a suitable $\Xi$ to implement the last step of the above method for such a $\Delta$. That is, our methods do produce surface group representations, but we cannot prove that they are faithful. As a result, we do not know if $\Sigma_g$ is \emph{ever} linear over a global field of positive characteristic.

We now state some results restricting the possible positive answers to Question \ref{qtn:DFRepn}.  We state the first only for a product of two trees, as the more general statement is somewhat technical.  Let $\Gamma$ be a torsion free hyperbolic group, let $T_1$ and $T_2$ be simplicial trees of bounded valence, and let $\rho: \Gamma \to \Aut (T_1 \times T_2)$ be a discrete faithful  representation.  Passing to a subgroup of index 2, we can assume that $\rho (\Gamma)$  preserves both trees. Let $\rho_i: \Gamma \to \Aut(T_i)$ be the induced representations.  Then we have:

\begin{theorem}\label{theorem:twotrees}
Either $\Gamma$ is free or $\rho _1$ and $\rho _2$ are faithful and indiscrete.
\end{theorem}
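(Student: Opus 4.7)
The plan is to prove the contrapositive: if $\Gamma$ is not free, then both $\rho_1$ and $\rho_2$ are faithful and indiscrete. As a preliminary, note that because $\rho$ is discrete and $\Gamma$ is torsion-free, any vertex stabilizer of $\Gamma$ acting on $T_1 \times T_2$ is simultaneously finite (by discreteness) and torsion-free, hence trivial; so $\Gamma$ acts freely on $T_1 \times T_2$. Observe also that if $\rho_i$ is both faithful and discrete, then $\Gamma \hookrightarrow \Aut(T_i)$ is a torsion-free discrete subgroup, hence acts freely on the tree $T_i$, and so $\Gamma$ is free by Bass--Serre theory. It therefore suffices to show: if $N_i := \ker \rho_i \neq 1$ for some $i$, then $\Gamma$ is free.

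Set $N_i := \ker \rho_i$. Since $\rho$ is faithful, $N_1 \cap N_2 = 1$; and because $N_1, N_2$ are normal in $\Gamma$, $[N_1, N_2] \subseteq N_1 \cap N_2 = 1$, so $N_1$ and $N_2$ commute. If both were nontrivial, then each (being torsion-free) would contain an infinite-order element, and these commuting infinite-order elements with trivial cyclic intersection would generate a $\ZZ^2$ subgroup of $\Gamma$, contradicting hyperbolicity. Hence at most one of $N_1, N_2$ is nontrivial; I may assume $N_1 \neq 1$ and $N_2 = 1$, so that $\rho_2$ is faithful.

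Next I would analyze the action of $\Gamma$ on $T_2$ via $\rho_2$. Because $N_1$ acts trivially on $T_1$, discreteness of $\rho$ in the product topology on $\Aut(T_1) \times \Aut(T_2)$ forces $N_1$ to act discretely on $T_2$; combined with torsion-freeness, $N_1$ acts freely on $T_2$, so $N_1$ is free. For each $v \in T_2$, put $\Gamma_v := \{\gamma \in \Gamma : \rho_2(\gamma)v = v\}$. Then $\Gamma_v \cap N_1 = 1$ since $N_1$ acts freely on $T_2$, so $\rho_1|_{\Gamma_v}$ is injective. Moreover, $\rho_2(\Gamma_v)$ lies in the compact open stabilizer of $v$ in $\Aut(T_2)$, so discreteness of $\rho$ descends to discreteness of $\rho_1(\Gamma_v)$ in $\Aut(T_1)$; and freeness of the $\Gamma$-action on $T_1 \times T_2$ ensures $\rho_1(\Gamma_v)$ acts freely on $T_1 \times \{v\} \cong T_1$. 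Hence $\rho_1(\Gamma_v) \cong \Gamma_v$ is a torsion-free discrete subgroup of $\Aut(T_1)$ acting freely, so it is free; the same argument shows edge stabilizers of $\Gamma$ on $T_2$ are free.

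At this point, Bass--Serre theory exhibits $\Gamma$ as the fundamental group of a graph of free groups with free edge groups, and the main obstacle is to pass from this to the conclusion that $\Gamma$ itself is free: in general such a decomposition need not yield a free group (closed surface groups admit such decompositions). My intended finishing strategy is to use the compatible free actions of the vertex groups $\Gamma_v$ on $T_1$ to refine the $T_2$-action by blowing up each $v \in T_2$ by a $\Gamma_v$-equivariant subtree of $T_1$ and gluing these along the edge-group inclusions $\Gamma_e \hookrightarrow \Gamma_v$, producing a one-dimensional object $\tilde T$ on which $\Gamma$ acts freely. Simple connectivity of $T_1 \times T_2$ and freeness of the $\Gamma$-action there suggest that $\tilde T$ should be simply connected, hence a tree; verifying this is the delicate step I expect to require the most care.
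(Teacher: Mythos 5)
Your preliminary reductions are correct, and several of them parallel the paper's own proof: the observation that a faithful \emph{and} discrete $\rho_i$ would force $\Gamma$ to be free, and the fact that at most one of $N_1,N_2$ can be nontrivial (the paper derives this from a lemma that any two nontrivial normal subgroups of a torsion-free hyperbolic group intersect nontrivially, proved by ping-pong; your $\ZZ^2$ obstruction is an equally valid and slightly cleaner route for $n=2$). The analysis showing that $N_1$ and the vertex and edge stabilizers of the $T_2$-action are free is also sound. But the argument stops exactly where the theorem's content begins: passing from ``$\Gamma$ is the fundamental group of a graph of free groups with free edge groups, acting freely on $T_1\times T_2$'' to ``$\Gamma$ is free.'' As you yourself note, closed surface groups admit such graph-of-groups decompositions, so the decomposition proves nothing by itself; and the proposed blow-up of $T_2$ into a one-dimensional $\Gamma$-space $\tilde T$ is not carried out. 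There is no reason the resulting graph of trees should be simply connected (resolutions of this kind generally are not), and simple connectivity of the two-dimensional complex $T_1\times T_2$ does not descend to such a one-dimensional object. This is a genuine gap, not a deferred verification.

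The paper closes your remaining case (``some $\rho_i$ unfaithful implies $\Gamma$ free'') by a short contradiction argument rather than by constructing a tree. Suppose $\Gamma$ is not free and, say, $\rho_2$ is not faithful, so by your reduction $\rho_1$ is faithful. Since $\Gamma$ is not free it cannot act freely on the tree $T_1$, so (after passing to a finite-index subgroup to avoid inversions) some vertex $p_1\in T_1$ has nontrivial stabilizer $\Delta_1$ under $\rho_1$. Pick nontrivial $x\in\ker\rho_2$ and $y\in\Delta_1$; if they share a common power $x^n=y^m$ then $x^n$ already lies in $\ker\rho_2\cap\Delta_1$, and otherwise, after replacing $x,y$ by suitable powers, $\langle x,y\rangle$ is free of rank two. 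The points $(y^k x^{-1})\cdot p_1=(y^k x^{-1}y^{-k})\cdot p_1$ all lie at distance $d_{T_1}(p_1,x\cdot p_1)$ from $p_1$, so by pigeonhole there is $k\neq 0$ with $xy^kx^{-1}\cdot p_1=p_1$, hence $[x,y^k]$ is a nontrivial element of the normal subgroup $\ker\rho_2$ that fixes $p_1$. It therefore fixes the vertex $(p_1,q)$ of $T_1\times T_2$ for any $q\in T_2$, contradicting discreteness of $\rho$ together with torsion-freeness of $\Gamma$. You would need this commutator-plus-pigeonhole step, or a genuine substitute for it, to complete your proof.
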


\noindent  One can formulate a version of this theorem for products of $n$ bounded valence simplicial trees, but the formulation is more complicated; see Theorem \ref{thm:DiscreteFaithful} below.  An easy corollary of that theorem is:

\begin{cor}\label{NoEmbed}
Suppose that $\Gamma$ is a torsion free, hyperbolic group that is not free. Then $\Gamma$ does not admit a faithful homomorphism into a finite direct product  of nonabelian free groups.
\end{cor}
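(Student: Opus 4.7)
The plan is to turn a hypothetical faithful embedding $\iota\colon\Gamma\hookrightarrow F_1\times\cdots\times F_n$ (with each $F_i$ nonabelian free) into a discrete faithful action on a product of bounded-valence trees, and then invoke the $n$-tree extension of Theorem \ref{theorem:twotrees}. For each $i$, fix a finite free basis of $F_i$; then $F_i$ acts freely on the corresponding Cayley tree $T_i$, which has bounded valence, and this action is discrete and faithful. Composing the coordinate-wise action on $T_1\times\cdots\times T_n$ with $\iota$ yields a discrete faithful representation $\rho\colon\Gamma\to\Aut(T_1\times\cdots\times T_n)$.

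Next I would pass to the finite-index subgroup $\Gamma_0\le\Gamma$ defined as the kernel of the natural permutation action on the $n$ tree factors, so that $\rho(\Gamma_0)$ preserves each $T_i$, producing induced representations $\rho_i\colon\Gamma_0\to\Aut(T_i)$. Each $\rho_i$ coincides with the projection $\iota\colon\Gamma_0\to F_i$ acting on $T_i$, and is therefore \emph{discrete}. Note that $\Gamma_0$ remains torsion-free and hyperbolic, and is still not free: a torsion-free group that contains a free subgroup of finite index is itself free (Karrass--Pietrowski--Solitar, or Stallings' theorem on ends combined with the Nielsen--Schreier theorem), so if $\Gamma_0$ were free then $\Gamma$ would be as well.

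With the setup in hand I would apply the $n$-tree generalization referenced after Theorem \ref{theorem:twotrees}. Its conclusion in the non-free case is (at least) that each factor representation $\rho_i$ is both faithful and indiscrete. This yields a contradiction on two fronts: the discreteness we established above directly contradicts indiscreteness; alternatively, faithfulness of any single $\rho_i$ would exhibit $\Gamma_0$ as a subgroup of the free group $F_i$, forcing $\Gamma_0$ to be free by Nielsen--Schreier, contrary to what we just observed. Either way, the assumption that $\Gamma$ embeds in a product of nonabelian free groups is untenable.

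The only real point of care is matching the hypotheses of the technical $n$-tree statement to our situation: one must check that the finite-index passage to $\Gamma_0$ preserves being torsion-free, hyperbolic, and non-free (all standard), and that the $n$-tree theorem's dichotomy genuinely excludes the factorwise-discrete case. Granted these, the corollary falls out immediately, which is why the authors call it ``easy.''
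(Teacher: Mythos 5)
Your argument is correct and is essentially the paper's own proof: both take the free, hence discrete and faithful, action of each $F_i$ on its Cayley tree, form the product action, and contradict Theorem \ref{NoAction} because each factor representation is discrete (and factors through a free group). The only difference is your passage to the finite-index subgroup $\Gamma_0$, which is unnecessary here since the composite representation already lands in $\prod \Aut(T_i)$ and so preserves each tree factor.
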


\noindent This corollary is also a very special case of the main results of \cite{BHMS}, where it is shown that any $FP_{\infty}$ subgroup of $F_1 \times \cdots \times F_n$ is a finite product of free groups.  We mention the corollary here as it is an obstruction to many approaches to constructing positive answers to Question \ref{qtn:DFRepn}.  For a product of two free groups, a short argument, analyzing projections as we do,  is given in \cite[Proposition 4.1]{K}.  For contrast, those notes construct surface subgroups in right angled Artin groups that are quite closely related to a product of free groups \cite[Theorem 4.4]{K}.

We now discuss some geometric motivation behind Question \ref{qtn:DFRepn}. Representations of surface groups into real and complex Lie groups have been of significant interest since the foundational work of Fricke and Klein. However, there has also been a recent surge of activity centered around the notion of an ``Anosov representation'', which is a dynamical generalization of the discrete and faithful representations into $\PSL_2(\RR)$ associated with complete hyperbolic structures on the underlying surface. For surface groups, this was first defined by Labourie \cite{L}, and one key feature is that any Anosov representation is discrete and faithful.

Recently, \cite{KLP} (see also \cite{GGKW}) gave dynamics-free definitions that could be used to define Anosov representations into linear algebraic groups over any local field. However, one can show that the theory is empty for surface groups acting on spaces with totally disconnected Furstenberg boundary (e.g., Euclidean buildings). Elementary arguments using arithmetic groups provide a wealth of \emph{faithful} representations into algebraic groups over local fields of characteristic zero (e.g., $\PGL_2(\QQ_p)$), but these representations have large vertex stabilizers in their actions on the associated building. These constructions extend to more general locally compact groups as seen in \cite{BGSS}. However, in all contexts, \emph{discrete} and faithful representations are more difficult to construct.

We also mention that Question \ref{qtn:DFRepn} is obviously relevant to whether irreducible lattices in the isometry groups of higher rank Euclidean buildings contain surface subgroups.  This question is analogous to Gromov's famous surface subgroup conjecture for hyperbolic groups.

We now briefly outline the paper. In \S \ref{sec:Prep}, we give some general facts on character varieties. In \S \ref{sec:Sg} we concentrate on surface groups and prove Theorem \ref{main} by a direct construction. In \S \ref{sec:DiscreteFaithful} we give the proof of Theorem \ref{sec:DiscreteFaithful}.  Finally in section \ref{sec:indiscrete} we discuss Theorem \ref{theorem:twotrees}, its more general form for $n$ trees, and Corollary
\ref{NoEmbed}.

\section{Generalities on character varieties}\label{sec:Prep}

In this section, $\Delta$ will denote any finitely generated group and $G$ any adjoint simple algebraic group over a field $k$. Let $A_{\Delta,G}$ denote the coordinate ring of the affine scheme $\Hom(\Delta,G)$. The conjugation action of $G$ on $\Hom(\Delta,G)$ determines a dual action of $G$ on $A_{\Delta,G}$, and we let $B_{\Delta,G}$ denote the ring of invariants and
\[
X_{\Delta,G} := \Spec(B_{\Delta,G})
\]
be the $G$-\emph{character variety} of $\Delta$. If $\sigma$ is any representation of $G$, $m$ is an integer in $[0,\dim \sigma]$, and $\delta\in \Delta$, then the
map from $G$-representations $\rho$ of $\Delta$ to $(-1)^m$ times the $x^{\dim\sigma-m}$ coefficient of the characteristic polynomial of $\sigma(\rho(\delta))$ lies in $B_{\Delta,G}$, and we denote this function by $T(\sigma,m,\delta)$.

The natural morphism $\Hom(\Delta,G)\to X_{\Delta,G}$ is submersive \cite[Thm.~1.1]{GIT}. For the remainder of this section, fix an irreducible component $X$ of $X_{\Delta,G}$ with generic point $\eta$. This corresponds to picking a minimal prime ideal $I_0$ of $B_{\Delta, G}$ to specify the component $X$ and then taking the field of fractions $K$ of $B_{\Delta,G}/I_0$ to specify the generic point $\eta$. Then the fiber over $\eta$ is nonempty and thus has a point lying on a finite extension $L$ of the residue field $K$ of $\eta$, which determines a homomorphism
\begin{equation}\label{eq:GenericRep}
f_\eta\colon \Delta\to G(L)
\end{equation}
such that if $\bar b$ denotes the image of $b\in B_{\Delta, G}$ in $ B_{\Delta, G}/I_0\subset K\subset L$, then
for all $\delta\in \Delta$, the characteristic polynomial of $f_\eta(\delta)$ is
$$\sum_{m=0}^{\dim \sigma} (-1)^m\overline{T(\sigma,m,\delta)} x^{\dim\sigma-m}.$$
The following gives a sufficient condition for $f_\eta$ to be faithful when $k$ has positive characteristic.

\begin{prop}\label{prop:GLinear}
Suppose that $\Delta$ has no $p$-torsion. For every nontrivial $\delta \in \Delta$, suppose that there exists a field extension $M/\FF_p$ and a homomorphism $g\colon \Delta\to G(M)$ associated with a point in $X(M)$ such that $g(\delta)$ does not have order $p^t$ for any integer $t \ge 0$. Then the homomorphism $f_\eta$ defined in \eqref{eq:GenericRep} is injective.
\end{prop}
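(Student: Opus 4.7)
The strategy is a contrapositive argument: I will show that if $f_\eta$ sends some nontrivial $\delta\in\Delta$ to $1\in G(L)$, then every $g\colon\Delta\to G(M)$ associated with a point of $X(M)$ must send $\delta$ to a unipotent element of $G(M)$. Since a unipotent element in characteristic $p$ has order a power of $p$, this directly contradicts the hypothesis on $\delta$ and so forces $f_\eta$ to be injective.

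To carry this out, first I fix a faithful finite-dimensional representation $\sigma\colon G\to\GL_n$; such a $\sigma$ exists because $G$ is adjoint, so the adjoint representation is a closed embedding with kernel $Z(G)=1$. Assume $f_\eta(\delta)=1$. Then $\sigma(f_\eta(\delta))=I_n$ has characteristic polynomial $(x-1)^n$, and comparing coefficient by coefficient with the formula recalled in the excerpt gives $\overline{T(\sigma,m,\delta)}=\binom{n}{m}$ in $K$ for each $0\le m\le n$. Equivalently, the regular function $T(\sigma,m,\delta)-\binom{n}{m}$ on $X_{\Delta,G}$ lies in the minimal prime $I_0$ cutting out $X$, and hence vanishes at every $M$-point of $X$. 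Applied to the hypothesized $g$, this means the characteristic polynomial of $\sigma(g(\delta))$ is $(x-1)^n$, so $\sigma(g(\delta))$, and therefore $g(\delta)$ itself (since a closed embedding is injective on $M$-points), is unipotent in $G(M)$.

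To finish, I invoke the elementary fact that in characteristic $p$ a unipotent element of $\GL_n(M)$ has order a power of $p$: writing $N:=\sigma(g(\delta))-I_n$, the matrix $N$ is nilpotent of index at most $n$, so the Frobenius identity yields $\sigma(g(\delta))^{p^t}=I_n+N^{p^t}=I_n$ for any $p^t\ge n$. Hence $g(\delta)$ has order dividing $p^t$, contradicting the hypothesis that $g(\delta)$ is not of order $p^t$ for any $t\ge 0$. The only step with genuine geometric content is the passage from ``$T(\sigma,m,\delta)-\binom{n}{m}$ vanishes at the generic point $\eta$'' to ``it vanishes on all of $X$'', which is immediate from irreducibility of $X$ (equivalently, from $I_0$ being prime). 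I therefore anticipate no serious obstacle in executing the plan; the role of the no-$p$-torsion assumption on $\Delta$ is simply to ensure the hypothesis on $g$ is not vacuous, since any $p$-power torsion element would be forced to map to an element of $p$-power order under every homomorphism.
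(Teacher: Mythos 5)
Your argument is correct and essentially identical to the paper's proof: both take $\sigma$ to be the adjoint representation, note that $T(\sigma,m,\delta)-\binom{\dim G}{m}$ vanishes at the generic point $\eta$ and hence on all of $X$, conclude that $\sigma(g(\delta))$ is unipotent, and derive the contradiction from the fact that a unipotent matrix in characteristic $p$ has $p$-power order (via Frobenius applied to $I+N$ with $N$ nilpotent). Your closing observation about the role of the no-$p$-torsion hypothesis is also accurate.
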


\begin{proof}
Suppose that $\delta\in\ker f_{\eta}$ and let $\sigma$ denote the adjoint representation of $G$.
Since the characteristic polynomial of the trivial matrix is $(x-1)^{\dim G}$, for all integers $0\le m\le \dim G$, the function
\[
T(\sigma,m,\delta) - \binom{\dim G}{m}
\]
vanishes at $\eta$, and hence vanishes on all of $X$ (i.e., maps to zero under $b\mapsto \bar b$ and therefore under
$B_{\Delta,G}\to B_{\Delta,G}/\mathfrak{p}$ for any prime ideal $\mathfrak{p}$ containing $I_0$). Since $g$ belongs to $X(M)$ and $\delta$ is in the kernel of $f_{\eta}$,
the characteristic polynomial of $\sigma(g(\delta))$ is $(x-1)^{\dim G}$, and therefore $\sigma(g(\delta))$ is unipotent.
If $p^t > \dim G$, then $\sigma(g(\delta^{p^t})) = 1$, which implies $g(\delta^{p^t})=1$, and this is a contradiction if $\delta$ is nontrivial.
\end{proof}

\medskip
\noindent
\textbf{Remark.}\ A similar statement holds in characteristic zero with the same proof, where we instead assume that $X(M)$ contains a point associated with a representation $g$ for which $g(\delta)$ is not unipotent. This type of argument appears in Culler and Shalen's celebrated paper on character varieties of fundamental groups of $3$-manifolds \cite{CS}. There $\Delta$ is the fundamental group of a one-cusped hyperbolic $3$-manifold $M$ of finite volume, $X$ is the component of the $\SL_2(\CC)$ character variety containing a representative for the discrete and faithful representation of $\Delta$ associated with the complete structure on $M$, and $g$ is an element of $\Delta$ representing a nontrivial loop on the boundary torus.
\medskip




At the expense of increasing the dimension of the representation, we obtain that $\Delta$ is linear over a  purely transcendental field of degree $r = \dim(X)$.

\begin{cor}
\label{LinearB}
If $p$ is a prime, $G$ is adjoint simple algebraic group over $\FF_p$, $\Delta$ is a finitely generated group without $p$-torsion, and $X_{\Delta,G}$ has an $r$-dimensional closed subscheme satisfying the assumptions of Proposition \ref{prop:GLinear}, then $\Delta$ is linear over $\FF_p(x_1,\ldots,x_r)$.
\end{cor}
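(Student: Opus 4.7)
The plan is to promote the faithful homomorphism produced by Proposition \ref{prop:GLinear} into a faithful matrix representation over the purely transcendental field $F := \FF_p(x_1,\ldots,x_r)$, using no tool heavier than restriction of scalars.

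First, from the hypothesis we extract an $r$-dimensional irreducible closed subscheme $X \subset X_{\Delta,G}$ on which the assumptions of Proposition \ref{prop:GLinear} hold (passing to an irreducible component of the given subscheme if necessary, which keeps the dimension equal to $r$ as long as we take a component of top dimension). This subscheme corresponds to a prime ideal $I_0$ of $B_{\Delta,G}$ whose residue field $K$ has transcendence degree exactly $r$ over $\FF_p$. Proposition \ref{prop:GLinear}, applied to this $X$, supplies an injective homomorphism $f_\eta \colon \Delta \hookrightarrow G(L)$, where $L/K$ is a finite extension. Composing with the adjoint representation $\sigma\colon G \to \GL(\mathfrak{g})$ (which is a closed immersion on $G$ because $G$ is adjoint and hence has trivial center) yields a faithful representation $\sigma \circ f_\eta \colon \Delta \hookrightarrow \GL_n(L)$ with $n = \dim G$.

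Next, choose a transcendence basis $x_1,\ldots,x_r$ of $K$ over $\FF_p$, so that $K/F$ is a finite algebraic extension with $F = \FF_p(x_1,\ldots,x_r)$; the finite extension $L/F$ then has some finite degree $d := [L:F]$. The left regular representation of $L$ on itself as an $F$-vector space is a faithful $F$-algebra embedding $L \hookrightarrow \Mat_d(F)$, which induces a group embedding $\GL_n(L) \hookrightarrow \GL_{nd}(F)$. Composing this with $\sigma \circ f_\eta$ yields a faithful representation $\Delta \hookrightarrow \GL_{nd}(F)$, proving that $\Delta$ is linear over $F = \FF_p(x_1,\ldots,x_r)$ as claimed.

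There is no real obstacle here: the corollary is a formal consequence of Proposition \ref{prop:GLinear} together with the standard restriction-of-scalars trick, the only observation being that one pays for reducing the coefficient field $L$ to its purely transcendental subfield $F$ by multiplying the dimension of the representation by $[L:F]$, which is exactly the content of ``at the expense of increasing the dimension'' in the statement.
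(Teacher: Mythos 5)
Your proof is correct and follows essentially the same route as the paper: apply Proposition \ref{prop:GLinear} to get a faithful map into $G(L)$, compose with the adjoint representation (faithful since $G$ is adjoint), pick $r$ algebraically independent elements to exhibit $L$ as a finite extension of $\FF_p(x_1,\ldots,x_r)$, and restrict scalars to multiply the matrix size by the degree. The only difference is that you spell out the restriction-of-scalars embedding and the reason the adjoint representation is injective, which the paper leaves implicit.
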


\begin{proof}
We showed that there is an injective homomorphism to $G(L)$, with $L$ a finite extension of the function field $K$ of $X$. Note that $K$ is a finitely generated extension of $\FF_p$ of transcendence degree $r$. Under the adjoint representation of $G$, we then obtain an injective homomorphism from $\Delta$ to $\GL_n(L)$, where $n := \dim G$. Choosing $r$ algebraically independent elements $x_1,\ldots,x_r\in L$, we obtain a purely transcendental subfield $k := \FF_p(x_1,\ldots,x_r)$ of which $L$ is a finite extension, say of degree $l$. Then $\Delta$ can be realized as a subgroup of $\GL_{nl}(k)$.
\end{proof}

\medskip
\noindent
\textbf{Remark.} In the above, we can replace $X$ with any irreducible closed subscheme of $X_{\Delta, G}$. In fact, embedding a group $\Gamma$ in a larger group $\Delta$ does precisely this by considering representations of $\Gamma$ that extend up to $\Delta$.
\medskip

\section{Surface groups}\label{sec:Sg}

\newcommand{\at}{A}
\newcommand{\bt}{B}
\newcommand{\ct}{C}
\newcommand{\dt}{D}
\newcommand{\z}{Z}

For this section we fix
\[
\Delta:=\langle a,b,c,d\,|\,a^3,\,b^2,\,c^2,\,d^3,\,abcd\rangle.
\]
Then $\Delta$ is a compact oriented Fuchsian group in which every element of finite order is conjugate to $a^{\pm1}$, $b$, $c$, $d^{\pm1}$, or $1$. There is a homomorphism from $\Delta$ onto $S_3$ sending $a$ and $d$ to $(123)$, $b$ to $(12)$, and $c$ to $(23)$. The kernel $\Gamma$ is of index $6$ and contains no nontrivial torsion element. Its Euler characteristic is
\[
\chi(\Gamma)=6\chi(\Delta) = 6(2-4+1/3+1/2+1/2+1/3) = -2.
\]
and it follows that it is isomorphic to $\Sigma_2$. Consequently, linear representations of $\Delta$ determine representations of $\Sigma_g$ for all $g \ge 2$.

Let
\[
\tilde \Delta = \langle \at,\bt,\ct,\dt,\z\,|\,\at^3 = \bt^2 =\ct^2 =\dt^3 = \z,\,\z^2 = \at\bt\ct\dt = 1\rangle.
\]
The map $(\at,\bt,\ct,\dt,\z)\mapsto(a,b,c,d,1)$ gives a surjective homomorphism $\tilde \Delta\to \Delta$ with central kernel $\langle \z\rangle \cong \ZZ/2\ZZ$.
Every element in $\tilde \Delta$ can be written as $\z^i$ times a word in the symbols $\at^{\pm1}$, $\bt$, $\ct$, $i\in\{0,1\}$.  Replacing the left hand side by the right in
\begin{equation}
\label{one}
\ct \at \bt \ct=  \bt \at^{-1} \ct \bt \at^{-1}
\end{equation}
and
\begin{equation}
\label{two}
\ct \bt \at^{-1} \ct= \at \bt \ct \at \bt
\end{equation}
we can reduce the number of symbols of type $\ct$.  We say a word is \emph{reduced} if no two consecutive symbols are equal or inverse to one another
and all  replacements of type \eqref{one} and \eqref{two} have been made.  Up to powers of $Z$,
the inverse of a word in $\at$, $\at^{-1}$, $\bt$, $\ct$ is obtained by reversing the order of symbols and interchanging $\at$ and $\at^{-1}$; in particular,
this process sends reduced words to reduced words.  A word is \emph{cyclically reduced} if every word obtained from it by cyclic permutation of symbols in
reduced.  Note that such a permutation does not affect the conjugacy class and therefore does not affect character values.

\begin{prop}
Let $K$ be an algebraically closed field of characteristic $p\ge 5$. For all $\delta\in\tilde\Delta$, there exists
$T_\delta\in \ZZ[u,v,w]$ such that if
$\rho\colon \tilde \Delta\to \SL_2(K)$ is any representation such that $\rho(\at)$, $\rho(\bt)$, $\rho(\ct)$, $\rho(\dt)$ have
orders $6$, $4$, $4$, and $6$ respectively, then
\[
\tr(\rho(\delta)) =  T_\delta(\tr(\rho(\bt\ct)), \tr(\rho(\ct \at)), \tr(\rho(\at \bt))).
\]
\end{prop}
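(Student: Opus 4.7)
The plan is to reduce the trace of an arbitrary element of $\tilde\Delta$ to a polynomial in three basic traces, using the classical Fricke--Vogt trace identities for $\SL_2$.

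First, I would use Cayley--Hamilton together with the order hypotheses to pin down the traces of the generators. For any $X\in \SL_2(K)$ one has $X^2 = \tr(X)\,X - I$ and hence $X^3 = (\tr(X)^2 - 1)\,X - \tr(X)\,I$. Since $\rho(\at)$ has order $6$, the element $\rho(\z) = \rho(\at)^3$ has order $2$ and therefore equals $-I$, the unique element of order $2$ in $\SL_2(K)$ when $p\ne 2$. Now $\rho(\bt)^2 = \rho(\z) = -I$ combined with Cayley--Hamilton forces $\tr(\rho(\bt)) = 0$, and identically $\tr(\rho(\ct)) = 0$. From $\rho(\at)^3 = -I$ the cubic identity forces $\tr(\rho(\at)) = 1$, and identically $\tr(\rho(\dt)) = 1$. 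The defining relation $\at\bt\ct\dt = 1$ gives $\rho(\at\bt\ct) = \rho(\dt)^{-1}$, and since $\tr(X^{-1}) = \tr(X)$ in $\SL_2$ we obtain $\tr(\rho(\at\bt\ct)) = 1$.

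Next, I would put every element of $\tilde\Delta$ into canonical form. Since $\z$ is central and $\bt^2 = \ct^2 = \z$, we have $\bt^{-1} = \bt\z$ and $\ct^{-1} = \ct\z$; combining this with $\dt = \ct\bt\at^{-1}$ (read off from $\at\bt\ct\dt = 1$ and $Z^2 = 1$) shows that every element of $\tilde\Delta$ may be written as $\z^i\omega$ for some $i \in \{0,1\}$ and some word $\omega$ in the alphabet $\{\at, \at^{-1}, \bt, \ct\}$. Since $\rho(\z) = -I$, one has $\tr(\rho(\z^i\omega)) = (-1)^i\tr(\rho(\omega))$, so it suffices to express $\tr(\rho(\omega))$ as an integer polynomial in $\tr(\rho(\bt\ct))$, $\tr(\rho(\ct\at))$, and $\tr(\rho(\at\bt))$.

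Finally, I would invoke the classical Fricke--Vogt--Procesi theorem for $\SL_2$: for any word $\omega$ in three variables $X, Y, Z$, the function $(X, Y, Z) \mapsto \tr(\omega(X, Y, Z))$ on $\SL_2^3$ is a polynomial with integer coefficients in the seven fundamental traces $\tr(X), \tr(Y), \tr(Z), \tr(XY), \tr(YZ), \tr(XZ), \tr(XYZ)$. Applied with $(X, Y, Z) = (\rho(\at), \rho(\bt), \rho(\ct))$ and with the constants $\tr(\rho(\at)) = 1$, $\tr(\rho(\bt)) = \tr(\rho(\ct)) = 0$, and $\tr(\rho(\at\bt\ct)) = 1$ substituted in, this yields the desired polynomial $T_\delta \in \ZZ[u,v,w]$. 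The only real work is the bookkeeping in the reduction to words in $\{\at, \at^{-1}, \bt, \ct\}$; the rest follows from the classical trace theorem, so I do not anticipate a serious mathematical obstacle.
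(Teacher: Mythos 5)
Your proposal is correct, and at bottom it runs on the same engine as the paper's proof: both rest on the $\SL_2$ identity $\tr(x)\tr(y)=\tr(xy)+\tr(x^{-1}y)$ together with the observation that the order hypotheses force $\rho(\z)=-I$, $\tr(\rho(\at))=\tr(\rho(\dt))=1$, $\tr(\rho(\bt))=\tr(\rho(\ct))=0$, and $\tr(\rho(\at\bt\ct))=1$, and both first rewrite an arbitrary element as $\z^i$ times a word in $\at^{\pm1},\bt,\ct$. The difference is where the inductive work lives: you outsource it to the Fricke--Vogt--Horowitz theorem (the trace of any word in three $\SL_2$-elements is an integer polynomial in the seven traces $\tr X,\tr Y,\tr Z,\tr(XY),\tr(YZ),\tr(ZX),\tr(XYZ)$), which is a legitimate citation even in characteristic $p$ because that theorem is a family of integer-coefficient polynomial identities valid over any commutative ring; the paper instead proves the special case it needs by hand, inducting on the length of a cyclically reduced word (using the relations \eqref{one} and \eqref{two} and cyclic reduction to guarantee that any such word of length at least $4$ repeats a symbol up to inverse in a position allowing the identity above to strictly shorten it). Your route is cleaner and skips the normal-form bookkeeping; the paper's route is self-contained and, along the way, records the explicit low-length character values (e.g.\ $\chi(\at\ct\bt)=\chi(\bt\ct)-1$) that are reused in deriving the cubic surface of Theorem \ref{tDelta}. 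Either way the argument is sound.
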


\begin{proof}
Let $\chi$ denote the character $\tr\circ \rho$.
It suffices to prove that
\[
\chi(w)=  T_\delta(\chi(\bt\ct), \chi(\ct \at), \chi(\at \bt))
\]
for all reduced words $w$.
We prove this by induction on the length of $w$.  As $\chi(z\delta) = -\chi(\delta)$, it suffices to consider words without occurrence of $Z$,
and moreover, we may assume $w$ is cyclically reduced.
For length $0$, $\chi(w) = 2$.  For length $1$, $\chi(\at) = 1$, $\chi(\bt) = \chi(\ct)=0$.
For length $2$ the character value does not depend on the order of the symbols.  Unless $\at^{-1}$ appears, there is nothing to check.
Moreover,
\begin{align*}
\chi(\at^{-1}\bt) &= \chi((\at^{-1}\bt)^{-1}) = -\chi(\bt \at),\\
\chi(\at^{-1}\ct) &= \chi((\at^{-1}\ct)^{-1}) = -\chi(\ct \at).
\end{align*}
The cyclically reduced words of length 3, up to inversion, cyclic permutation, and multiplication by $\z$,
are $\at\bt\ct$ and $\at\ct\bt$.  We have
\[
\chi(\at\bt\ct) = \chi(\dt^{-1}) = \chi(\dt) = 1.
\]
From the $\SL_2$ identity
\begin{equation}
\label{identity}
\tr(x)\tr(y) = \tr(xy) + \tr(x^{-1}y),
\end{equation}
we deduce
\begin{align*}\chi(\at \ct\bt) &= \chi(\at)\chi(\ct\bt) - \chi(\at^{-1}\ct\bt) = \chi(\ct\bt) - \chi(\at^{-1}\ct\bt) \\
&= \chi(\bt\ct) - \chi(\bt\ct\at)= \chi(\bt\ct) - 1.
\end{align*}

For words cyclically reduced words $w$ of length $l \ge 4$, we may assume, after cyclic permutation of the symbols of $w$ if necessary,
that the first symbol of $w$ either  coincides with or is inverse to the $k$th symbol for some $k  \in [2,l-1]$.
Defining $x$ and $y$ to be the product of the first $k-1$ symbols of $w$ and the
remaining $l-k+1$ symbols respectively, we can then use \eqref{identity} to express $\chi(w)$ as a polynomial with $\ZZ$-coefficients in terms
of $\chi$-values of words strictly shorter than $w$.
\end{proof}

\begin{thm}
\label{tDelta}
The component of the character variety $\Hom(\tilde\Delta,\SL_2)/\SL_2$ containing the representations that are faithful when restricted to
$\langle \at\rangle$, $\langle \bt\rangle$, $\langle \ct\rangle$, and $\langle \dt\rangle$ is birationally equivalent to the affine cubic surface
\[
S := \Spec\big(K[u,v,w]/(uvw+u^2+v^2+w^2-u-2)\big).
\]
\end{thm}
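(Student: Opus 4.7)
My plan is to identify $u = \chi(\bt\ct)$, $v = \chi(\ct\at)$, and $w = \chi(\at\bt)$ as birational coordinates on the component $X$, and to derive the defining equation of $S$ from the remaining group-theoretic constraints.

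The preceding proposition shows that every character value on $X$ is a polynomial in $u, v, w$, so these three functions generate the coordinate ring of $X$ and induce a morphism $X \to \AA^3_K$. On $X$ we have $\chi(\at) = 1$ and $\chi(\bt) = \chi(\ct) = 0$ (from the prescribed orders $6, 4, 4$), and the relation $\at\bt\ct\dt = 1$ combined with $\chi(\dt) = 1$ forces $\chi(\at\bt\ct) = \chi(\dt^{-1}) = 1$. The classical Fricke identity for triples in $\SL_2$ expresses $\chi(\at\bt\ct)$ as a root of an explicit monic quadratic polynomial with coefficients in $\ZZ[\chi(\at), \chi(\bt), \chi(\ct), u, v, w]$. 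Substituting the trace specializations and setting $\chi(\at\bt\ct) = 1$ yields the cubic $uvw + u^2 + v^2 + w^2 - u - 2 = 0$, so the image of $X \to \AA^3_K$ lies in $S$.

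For birationality, the plan is to construct a rational inverse by explicit matrix parametrization. Over a field extension containing a primitive sixth root of unity $\zeta$, normalize $\rho(\at) = \diag(\zeta, \zeta^{-1})$ and write $\rho(\bt), \rho(\ct)$ as generic trace-zero matrices in $\SL_2$. The values of $w$ and $v$ determine the diagonal entries of $\rho(\bt)$ and $\rho(\ct)$ respectively, while the residual action of the diagonal torus centralizing $\rho(\at)$ normalizes one off-diagonal entry of $\rho(\bt)$. The condition $\chi(\bt\ct) = u$ is a quadratic in the remaining free parameter, but combining it with $\chi(\at\bt\ct) = 1$ produces a linear equation that uniquely determines this parameter over a dense open subset of $S$. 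A dimension count $\dim X = \dim S = 2$ then confirms that $X \to S$ is birational.

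The main subtlety is that the Fricke quadratic realizes the $\langle \at, \bt, \ct \rangle$ character variety as a double cover of the $(u, v, w)$-space, with the two sheets parametrizing $\chi(\at\bt\ct)$ and $\chi(\at\ct\bt)$; fixing $\chi(\dt) = 1$ singles out the correct sheet containing $X$. The principal technical obstacle in execution will be the direct matrix computation verifying that the two constraint equations collapse to a single linear condition on the last free parameter, but this and the Fricke specialization are both routine bookkeeping.
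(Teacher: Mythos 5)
Your proposal is correct, and the first half coincides with the paper's argument: both derive the cubic by substituting $\chi(\at)=1$, $\chi(\bt)=\chi(\ct)=0$, $\chi(\at\bt\ct)=\chi(\dt^{-1})=1$ into the Fricke trace identity for triples in $\SL_2$, and both note that fixing $\chi(\at\bt\ct)=1$ selects one sheet of the double cover whose other sheet is $\chi(\at\ct\bt)$. Where you genuinely diverge is in establishing birationality. The paper splits this into two steps: dominance, proved by exhibiting an explicit two-parameter family (a concrete matrix triple depending on $q,s,r$ whose image projects onto a dense subset of the $u$--$w$ plane), and generic injectivity, obtained by citing Nakamoto's theorem that the character map is an isomorphism at absolutely irreducible representations, together with the observation that $\chi(\bt\ct)\ne\pm2$ forces absolute irreducibility. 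Your route instead builds an explicit rational inverse $S\dashrightarrow X$: after diagonalizing $\rho(\at)$ and normalizing an off-diagonal entry of $\rho(\bt)$ by the residual torus, the two conditions $\chi(\bt\ct)=u$ and $\chi(\at\bt\ct)=1$ become a linear system in the two off-diagonal entries of $\rho(\ct)$ with generically invertible coefficient matrix, and the determinant constraint on $\rho(\ct)$ is then exactly the equation of $S$. This checks out and simultaneously yields dominance and generic injectivity without appealing to Nakamoto, at the cost of a more involved computation; it also implicitly re-verifies that generic points of $X$ are irreducible (reducible semisimple triples force $u=\pm2$), which you should make explicit when arguing that your normalized form captures the generic point of $X$. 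One small over-claim: you assert that $u,v,w$ generate the coordinate ring of $X$ because all trace functions are polynomials in them; in positive characteristic the generation of the invariant ring by traces requires separate justification, but your argument never actually uses this, since the explicit inverse already gives birationality.
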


\begin{proof}
We consider the morphism $\phi\colon \Hom(\tilde\Delta,\SL_2)/\SL_2\to \AA^3$ mapping
\[
\chi\mapsto (\chi(\bt\ct),\chi(\ct\at),\chi(\at\bt)).
\]
For $K$ not of characteristic $2$ or $3$, the conditions $\chi(\at)=\chi(\at\bt\ct)=1$ and $\chi(\bt)=\chi(\ct)=0$ cut out the open and closed subscheme $X$
of $\Hom(\tilde\Delta,\SL_2)/\SL_2$ consisting of representations $\rho$ faithful on the finite cyclic groups generated by
$\at$, $\bt$, $\ct$, $\dt$.
The following identity for triples $(X,Y,Z)$ of $2\times 2$ matrices can  be verified easily:
\begin{align*}\tr(&X)\tr(Y)\tr(Z)\tr(XYZ) + \tr(YZ)\tr(ZX)\tr(XY) + \tr(XYZ)^2 \\
&+  \sum \det X \tr(YZ)^2 + \sum \tr(X)^2\det YZ - 4\det XYZ\\
&-\sum \det X\tr(Y)\tr(Z)\tr(YZ) - \sum \tr(X)\tr(YZ)\tr(XYZ)=0,\\
\end{align*}
where each indicated sum is a symmetric sum of $3$ terms (including the specified one) given by cycling permuting $X, Y$ and $Z$.
This implies the following identity for triples $(x,y,z)$ in $\SL_2$:
\begin{align*}\tr(x)\tr(y)&\tr(z)\tr(xyz) + \tr(yz)\tr(zx)\tr(xy) + \tr(xyz)^2 \\
&\qquad\qquad\qquad\qquad\qquad+  \sum \tr(yz)^2 + \sum \tr(x)^2 \\
&=\sum \tr(y)\tr(z)\tr(yz) + \sum \tr(x)\tr(yz)\tr(xyz)+ 4.\\
\end{align*}

It follows that the restriction of $\phi$ to $X$ maps to $S$.  To see this recall that we map $A \rightarrow x$, $B \rightarrow y$ and $C\rightarrow z$ and that $\chi(\at) = 1$, $\chi(\bt) = \chi(\ct)=0$ and $\chi(\at\bt\ct)=1$ and let the variable $u= \chi(BC)$, $v = \chi(CA)$ and $w=\chi(AB)$.

Since $S$ is irreducible, to show that $\phi|_X$ is dominant
it suffices to prove the image is $2$-dimensional.  We can do this by projecting the image onto the $u$-$w$ plane.  Setting
\[
\at = \begin{pmatrix} q&r \\ \frac{-1+q-q^2}r&1-q\end{pmatrix},\, \bt = \begin{pmatrix} i&0\\0&-i\end{pmatrix},\, \ct = \begin{pmatrix}s&1\\-1-s^2&-s \end{pmatrix},
\]
we obtain a triple in $X$ which maps to $((2q-1)i,2si)$ as long as there exists a solution $r$ to the equation
\[
i(s^2+1)r - \frac{s(q^2-q+1)}r + R(s,q) = 0
\]
for an explicit polynomial $R$.  As long as $s$ is not a $4$th root of unity or $q$ is not a $6$th root of unity, this has at least one solution $r$.

Finally, $\phi|_X$ is an isomorphism at every point in $X$ corresponding to an absolutely irreducible representation \cite{Nakamoto}.
As long as $\chi(\bt\ct) \not\in \{\pm 2\}$, $\rho(\bt)$ and $\rho(\ct)$ cannot lie in a common Borel subgroup of $\SL_2$, and it follows that
$\chi$ is absolutely irreducible.
\end{proof}

Our goal now is to show that the representation of $\tilde\Delta$ on the function field of the cubic surface $S$ from Theorem \ref{tDelta} is faithful. By Proposition \ref{prop:GLinear}, this provides a faithful representation of $\Delta$ into $\PGL_2$ over a field of transcendence degree $2$ and characteristic $p$, which completes the proof of Theorem \ref{thm:MainDim2}. To achieve this, we use Proposition \ref{prop:GLinear} and the following result, which shows that $\Delta$ is residually $\Xi$ for $\Xi$ a virtually free group (in fact, $\Xi$ is isomorphic to the modular group $\PSL_2(\ZZ)$).

\begin{prop}
\label{Residual}
For every nontrivial element $\delta\in \Delta$ there exists $N$ such that for every $n>N$, $\delta$ is not in the kernel of the homomorphism
\[
\psi_n\colon \Delta\to \Xi := \langle x,y \mid x^3 , y^2 \rangle
\]
defined by
\[
a \mapsto x,\,b \mapsto y,\, c \mapsto (xy)^n y^{-1} (xy)^{-n},\, d \mapsto (xy)^n x^{-1} (xy)^{-n}.
\]
\end{prop}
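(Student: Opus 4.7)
My plan is to exploit the amalgamated-free-product decomposition
\[
\Delta = \Delta_1 *_C \Delta_2,
\]
where $\Delta_1 = \langle a,b\mid a^3,b^2\rangle$, $\Delta_2 = \langle c,d\mid c^2,d^3\rangle$, and $C = \langle ab\rangle = \langle (cd)^{-1}\rangle$ is the infinite cyclic subgroup amalgamated via the relation $abcd = 1$. The only non-obvious check that $\psi_n$ is a well-defined homomorphism is $abcd\mapsto 1$, which follows from $\psi_n(ab) = z$ and $\psi_n(cd) = z^n (yx^{-1}) z^{-n} = z^{-1}$. Moreover, $\psi_n$ restricts to an isomorphism onto $\Xi$ on each vertex group: on $\Delta_1$ the tautological map $a\mapsto x, b\mapsto y$, and on $\Delta_2$ the isomorphism $\phi_2\colon c\mapsto y, d\mapsto x^{-1}$ composed with conjugation by $z^n$. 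Both restrictions send $C$ onto $\langle z\rangle\subset\Xi$, so $\psi_n$ factors as an isomorphism $\Delta \xrightarrow{\sim}\Xi *_{\langle z\rangle}\Xi$ followed by the ``partial conjugation'' map $\mu_n\colon\Xi *_{\langle z\rangle}\Xi\to\Xi$ that is the identity on the left factor and conjugation by $z^n$ on the right (well defined since conjugation by $z^n$ fixes $z$).

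For a nontrivial $\delta\in\Delta$ I would use the Bass--Serre normal form. If $\delta$ lies entirely in one factor, injectivity on that factor gives $\psi_n(\delta)\neq 1$. Otherwise $\delta = g_1 g_2\cdots g_k$ with $k\ge 2$ and $g_i$ alternating in $\Delta_1\setminus C$ and $\Delta_2\setminus C$; letting $h_i\in\Xi\setminus\langle z\rangle$ be the image of $g_i$ under the appropriate factor isomorphism, one obtains (up to a symmetric variant starting with a conjugated factor)
\[
\psi_n(\delta) = h_1\cdot z^n h_2 z^{-n}\cdot h_3\cdot z^n h_4 z^{-n}\cdots.
\]
The remaining task is to verify that this word is nontrivial in $\Xi$ for $n$ sufficiently large.

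My strategy for that final step is to put each $h_i$ in a $z$-reduced form $h_i = z^{p_i}\tilde h_i z^{-q_i}$, with $\tilde h_i\notin\langle z\rangle$ and $p_i, q_i$ chosen so that no factor of $z^{\pm 1}$ can be peeled off either end of $\tilde h_i$ without shortening its syllable length in $\Xi$. Substituting yields
\[
\psi_n(\delta) = z^{p_1}\tilde h_1 z^{m_1}\tilde h_2 z^{m_2}\cdots\tilde h_k z^{m_k},
\]
where the interior exponents satisfy $m_i = \pm n + O_\delta(1)$ with alternating signs. Once $n$ exceeds a bound depending on the $|p_i|+|q_i|$ and the syllable lengths of the $\tilde h_i$, every interior $m_i$ is nonzero and large, and the $z$-reducedness of the $\tilde h_i$ prevents cancellation at every junction; the displayed word then collapses, in the free product $\Xi = \mathbb{Z}/3*\mathbb{Z}/2$, to a normal form of syllable length of order $n$, which is in particular nontrivial. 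The main obstacle is the combinatorial bookkeeping required to formalize this reduction -- precisely defining the $z$-reduced form and ruling out surprise cancellations when the pieces are concatenated. An equivalent and perhaps cleaner route would be to translate the claim into a translation-length estimate for $\psi_n(\delta)$ on the Bass--Serre tree of $\Xi$, using the fact that the fixed vertices of $\psi_n(a), \psi_n(b), \psi_n(c), \psi_n(d)$ all sit on the axis of $z$ at positions $1, 0, 2n, 2n+1$, so that a suitably alternating product of elliptic elements with widely separated fixed sets is forced to be a hyperbolic element whose translation length grows linearly in $n$.
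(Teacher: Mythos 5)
Your proposal is correct and is essentially the paper's own argument: the unique normal form $\alpha_0\gamma_1\alpha_1\cdots\gamma_k\alpha_k$ used in the paper is precisely the reduced form for the amalgam $\Delta\cong\Xi *_{\langle ab\rangle}\Xi$ that you identify (a framing the paper itself makes explicit in the remark following its proof, where $\psi_n$ is described as $\psi_0$ precomposed with an $n$-fold Dehn twist about $ab$), and the key step is the same bounded-cancellation estimate in $\ZZ/2 * \ZZ/3$. The only real difference is that the paper discharges the ``combinatorial bookkeeping'' you defer by normalizing the $\gamma_i$ so as not to begin with $d^{-1}c$ or end with $cd$, which caps the cancellation against $(xy)^{\pm n}$ at each junction at a single letter pair and makes the choice $N=\max_i|\alpha_i|$ immediate.
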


\begin{proof}
Let $S(a,b)$ denote the set of finite strings $\alpha$ in the symbols $a$, $a^{-1}$, $b$ such that an $a$ or $a^{-1}$ can only be followed by a $b$ and vice versa.  Let $S(c,d)$ denote the set of finite strings $\gamma$ in  $c$, $d$, and $d^{-1}$
such that $d$ or $d^{-1}$ can only be followed by $c$, and vice versa, and moreover, every $\gamma$ is nonempty, and $\gamma$ cannot begin or end with the substring $cd$ or $d^{-1} c$.
For any element $\alpha\in S(a,b)$ (resp. $\gamma\in S(c,d)$), we denote by $\bar \alpha$ (resp.\ $\bar\gamma$) the element of $\Xi$ obtained by substituting $x$ for $a$ and $y$ for $b$ (resp.\ $y^{-1}$ for $c$ and $x^{-1}$ for $d$).
Every element $\delta \in \Delta$ can be written uniquely as a word
\[
\alpha_0 \gamma_1\alpha_1\cdots\gamma_k \alpha_k,
\]
for some $k\ge 0$, some $\alpha_i\in S(a,b)$ and some $\gamma_j\in S(c,d)$.
Thus
\[
\psi_n(\delta) = \bar\alpha_0 (xy)^n \bar \gamma_1 (yx^{-1})^n\bar\alpha_1 \cdots (xy)^n \bar \gamma_k (yx^{-1})^n\bar\alpha_k.
\]
As $\gamma_i$ cannot begin with $d^{-1} c$ or end with $cd$, at most one symbol pair cancels in $(xy)^n \bar \gamma_i$, and at most one pair cancels in $\bar\gamma_i (yx^{-1})^n$.  Therefore, if $n$ is larger than the maximum length of the $\alpha_i$, the expression as a whole is nontrivial in $\Xi$ as claimed.
\end{proof}

\medskip
\noindent
\textbf{Remark.} The group $\Delta$ is the free product of $\Xi$ with itself amalgamated over the subgroup generated by $a b = (c d)^{-1}$. Geometrically this splits the sphere with four cone points of orders $3,2,2,3$ along the curve dividing the cone points into two sets of $2,3$. The map $\psi_0$ is the natural projection onto $\Xi$ from this free product with amalgamation, and $\psi_n$ for $n \ge 1$ is the composition of a Dehn twist of order $n$ around $a b$ with $\psi_0$. The above algebraic calculation means that the curve obtained from $\delta$ after a sufficiently high power of the Dehn twist must map to a nontrivial reduced word in the group $\Xi \cong \ZZ/2 * \ZZ/3$ under $\psi_0$. This strategy was first used by Baumslag to show that surface groups are residually free \cite{Ba}.

\begin{prop}
\label{Hausdorff}
For all $p\ge 3$, there exists an injective homomorphism  $\Xi\to \PGL_2(\FF_p(t))$.
\end{prop}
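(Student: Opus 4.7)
The plan is to exhibit explicit matrices in $\PGL_2(\FF_p(t))$ realizing the generators of $\Xi$, and then prove injectivity via a ping-pong argument on the Bruhat--Tits tree $T$ of $\PGL_2(F)$ with $F = \FF_p((t))$.

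First I would set
\[
X := \begin{pmatrix} 1 & 1 \\ -1 & 0 \end{pmatrix}, \qquad Y := \begin{pmatrix} 0 & 1 \\ t & 0 \end{pmatrix}
\]
in $\PGL_2(\FF_p(t))$. Direct computation gives $X^3 = -I$ and $Y^2 = tI$, both scalar, while $X$, $X^2$, and $Y$ are non-scalar (for every $p \ge 3$; when $p = 3$ the matrix $X$ reduces to a Jordan block, which is still non-central). Thus $X$ has order $3$ and $Y$ has order $2$ in $\PGL_2$, so $x \mapsto X$, $y \mapsto Y$ defines a homomorphism $\phi \colon \Xi \to \PGL_2(\FF_p(t))$, and the remaining task is to show $\phi$ is injective.

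Next I would analyse the action of $\langle X, Y\rangle$ on $T$. With $\OC := \FF_p[[t]]$, let $v_0 := [\OC^2]$ be the standard vertex. Since $X \in \GL_2(\OC)$, it fixes $v_0$, while $Y$ carries $v_0$ to the adjacent vertex $v_1 := [\OC \oplus t\OC]$. The link of $v_0$ is naturally $\PP^1(\FF_p)$, and the edge to $v_1$ corresponds to the line $\langle (1,0)\rangle$; reducing $X$ modulo $t$ and acting on $\PP^1(\FF_p)$ produces the three-cycle $\langle (1,0)\rangle \to \langle (1,-1)\rangle \to \langle (0,1)\rangle \to \langle (1,0)\rangle$, valid for all $p \ge 3$. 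Consequently the three edges $\{v_0, X^i v_1\}$ for $i = 0, 1, 2$ are pairwise distinct.

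The crux is the ping-pong. Let $T_i$ denote the connected component of $T \setminus \{v_0\}$ containing $X^i v_1$, so that $X$ permutes $T_0, T_1, T_2$ cyclically. Define $A := T_0 \setminus \{v_1\}$ and $B := (T \setminus \{v_0\}) \setminus T_0$; both are disjoint nonempty subsets of the vertex set. Then $X^k \cdot A \subset T_k \subset B$ for $k = 1, 2$. Since $Y$ swaps the two half-trees determined by the edge $\{v_0, v_1\}$, namely $T_0$ and $\{v_0\} \cup B$, one has $Y \cdot B = T_0 \setminus \{v_1\} = A$. The classical ping-pong lemma for free products with one cyclic factor of order at least three now yields $\langle X, Y\rangle \cong \ZZ/3 \ast \ZZ/2 \cong \Xi$, completing the proof. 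The main technical hurdle is the verification that $X$ genuinely moves the edge $\{v_0, v_1\}$ and that $Y$ inverts this edge rather than fixing a vertex, both of which reduce to elementary linear algebra over $\FF_p$.
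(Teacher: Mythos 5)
Your proof is correct, and it takes a genuinely different route from the paper's. The paper follows Hausdorff's original argument: with essentially the same pair of matrices (the same $y$, and an $x$ generating the same kind of order-$3$ cyclic subgroup), it considers the action on $\PP^1(\FF_p(t))$ together with the specialization map $\PP^1(\FF_p(t))\to\PP^1(\FF_p)$, $(P(t):Q(t))\mapsto(P(0):Q(0))$, observes that $y$ pushes everything in $\PP^1(\FF_p)\setminus\{\infty\}$ to $\infty$ while $x$ permutes $\{0,-1,\infty\}$, and concludes that every nontrivial reduced word moves the point $1$ into $\{0,-1,\infty\}$; since $1\notin\{0,-1,\infty\}$ for $p\ge 3$, injectivity follows. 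This is a self-contained ``track one point'' argument requiring no machinery. Your version replaces $\PP^1$ by the Bruhat--Tits tree of $\PGL_2(\FF_p((t)))$ and invokes the standard ping-pong lemma for free products; the two are closely related, since the paper's target $\PP^1(\FF_p)$ is exactly the link of your base vertex $v_0$ and your computation of the $3$-cycle on the link is the paper's computation of the $\iota(x)$-orbit of $\infty$. What your approach buys is a cleaner structural conclusion (you literally exhibit $\langle X,Y\rangle$ as the free product $\ZZ/3\ast\ZZ/2$ acting on a tree, rather than only deducing injectivity of the given presentation) at the cost of setting up the tree; the paper's approach is more elementary and shorter. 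All of your verifications check out: $X^3=-I$ and $Y^2=tI$ are scalar with $X,Y$ non-scalar, the three lines $\langle(1,0)\rangle$, $\langle(1,-1)\rangle$, $\langle(0,1)\rangle$ are distinct in $\PP^1(\FF_p)$ for every $p\ge 3$ (indeed for $p=2$ as well), and $Y$ inverts the edge $\{v_0,v_1\}$ since $Y(v_0)=v_1$ and $Y^2$ is a homothety, so it swaps the two half-trees as you claim.
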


\begin{proof}
We use the construction of Hausdorff \cite[Anh.~pp.~469--472]{H}, which we recast in a somewhat more conceptual form.
Consider the homomorphism $\iota$ defined by
\[
x\mapsto \left[\begin{pmatrix}0&1\\ -1&-1\end{pmatrix}\right],\ y\mapsto\left[\begin{pmatrix}0&1\\ t&0\end{pmatrix}\right],
\]
where $[M]$ denotes the image of $M\in \GL_2(\FF_p(t))$ in $\PGL_2(\FF_p(t))$.  To prove $\iota$ is injective, we consider the action
of $\Xi$ on $\PP^1(\FF_p(t))$ which it determines.  There is a natural map $\PP^1(\FF_p(t))\to \PP^1(\FF_p)$ characterized by
\[
(P(t):Q(t))\mapsto (P(0):  Q(0)),
\]
where $P(t)$ and $Q(t)$ are relatively prime polynomials.  Whereas $\iota(x)$ induces a well-defined operation on all of $\PP^1(\FF_p)$,
$\iota(y)$ maps $\PP^1(\FF_p)\setminus \{\infty\}$ to $\infty$ but is not well-defined at $\infty$. Since the $\iota(x)$-orbit of $\infty$ is $\{0,-1,\infty\}$, it follows that for every nontrivial $g$, $\iota(g)$ maps $1$ to $0$, $-1$, or $\infty$. Thus $\iota$ is injective.
\end{proof}

We conclude by proving Theorem~\ref{main}. There is a natural morphism
$\Hom(\tilde \Delta,\SL_2)\to \Hom(\Delta,\PGL_2)$ which is surjective with finite fibers
By Theorem~\ref{tDelta}, the generic point $\eta$ of $S$ maps to the generic point of a $2$-dimensional component of
$X_{\Delta,\PGL_2}$. This component contains points representing all homomorphisms $\Delta\to \PGL_2$
which lift to homomorphisms $\tilde \Delta\to \SL_2$ where $A$, $B$, $C$, and $D$ map to points of order
$6$, $4$, $4$, and $6$ respectively, i.e., all points representing homomorphisms where $a$ and $b$
maps to points of order $3$ and $2$, respectively.  By Corollary~\ref{LinearB}, Proposition~\ref{Residual}, and Proposition~\ref{Hausdorff}, this implies the theorem.

\section{The proof of Theorem \ref{thm:DiscreteFaithful}}\label{sec:DiscreteFaithful}

Let $K$ be a global field of characteristic $p > 0$. Then $K$ is the function field of a unique smooth projective curve $C$ over $\FF_p$. Each place $\nu$ of $K$ determines a nonarchimedean local field $K_\nu$, the completion of $K$ with respect to the topology induced by the equivalence class of valuations determined by $\nu$. Let $\mathcal{O}_\nu$ be the valuation ring of $K_\nu$ and
\[
\AA \subset \prod_\nu K_\nu
\]
be the adele ring of $K$, i.e., the restricted direct product of the $K_\nu$ with respect to the $\mathcal{O}_\nu$. The diagonal embedding of $K$ into $\AA$ is well-known to be discrete \cite[\S II.14]{CF}, and this extends to a discrete embedding of $\PGL_2(K)$ into $\PGL_2(\AA)$.

Therefore, if $\rho : \Gamma \to \PGL_2(K)$ is a faithful representation, then $\rho$ extends to a discrete representation of $\rho_\AA\colon \Gamma \to \PGL_2(\AA)$. Since $\Gamma$ is finitely generated, this in fact determines a discrete representation of $\Gamma$ into
\[
\rho_S\colon \Gamma \to G = \prod_{\nu \in S} \PGL_2(K_\nu)
\]
for some finite nonempty collection $S$ of places of $K$. Indeed, each generator $\gam$ of $\Gamma$ in a fixed generating set lies in $\PGL_2(\mathcal{O}_\nu)$ for all but a finite set $S_\gam$ of places. Since the image of $\Gamma$ under $\rho_\AA$ is discrete, each $S_\gam$ must be nonempty, and we take $S$ to be the union of the $S_\gam$ over our finite generating set.

We refer the reader to \cite{Serre} for the definition of the tree $T_\nu$ associated with the group $\PGL_2(K_\nu)$. Recall that $T_\nu$ is a $(q + 1)$-regular tree, where $q$ is the cardinality of the residue field of $K$. That the image of $\hat{\rho}$ is discrete means that $\rho_S(\Gamma)$ acts discretely on
\[
X = \prod_{\nu \in S} T_\nu.
\]
In particular, we have shown that $\Gamma$ acts discretely and faithfully on a finite product of finite-valence trees, which proves Theorem \ref{thm:DiscreteFaithful}.

\medskip
\noindent
\textbf{Remark.} In general, one has little control over the number of trees. However, when the representation is constructed from a curve on the $\SL_2(\FF_p)$-character variety of $\Gamma$, there is a bound. Let $X$ be an affine curve of characters of $\Gamma$, and assume that the generic point of $X$ determines a representation of $\Gamma$ into $\SL_2(K)$, where $K$ is the function field of $X$. Let $C$ be the smooth complete curve associated with $X$ and $f\colon C \to X$ be the normalization, which is a birational morphism. The points
\[
C \smallsetminus f^{-1}(X)
\]
are sometimes called the \emph{ideal points} of $X$. The image of $\Gamma$ in $\SL_2(K_\nu)$ lies in $\SL_2(\mathcal{O}_\nu)$ for every $\nu$ associated with a point in $f^{-1}(X)$. Indeed, the translation length on $T_\nu$ is associated with the valuation of the trace, and the character must have a pole at the point on $C$ associated with $\nu$ in order to have nontrivial translation length. However, characters are finite-valued on the affine part of $X$, so the set $S$ described above consists only of ideal points.

\section{Actions on products of trees}
\label{sec:indiscrete}

Our goal is to prove the following generalization of Theorem \ref{theorem:twotrees}:

\begin{thm}\label{NoAction}
Suppose that $\Gamma$ is a torsion free hyperbolic group that is not free. Let
\[
X = T_1 \times \cdots \times T_n
\]
be a product of finite-valence trees, $G_i = \mathrm{Aut}(T_i)$, and $G = \prod G_i$. Let $\sigma_i$ denote the projection of $G$ onto $G_i$. If $\rho : \Gamma \to G$ is a discrete and faithful representation, then there are at least two $i$ such that $\rho_i = \sigma_i \circ \rho$ is faithful and has indiscrete image. Moreover, suppose $\rho_1, \dots, \rho_r$ are faithful representations and the other $\rho_i$ are not. Then the representation
\[
\rho_1 \times \cdots \times \rho_r : \Gamma \to G_1 \times \cdots \times G_r
\]
is discrete and faithful.
\end{thm}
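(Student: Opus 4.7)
The plan rests on three pieces. First I would establish a basic lemma: if a torsion-free group $H$ acts faithfully and discretely on a finite-valence tree $T$, then $H$ is free. The reason is that each vertex stabilizer in $H$ is the intersection of the discrete $H$ with the compact vertex stabilizer in $\Aut(T)$, so is finite; torsion-freeness forces it to be trivial, so $H$ acts freely on $T$ and is free by Bass--Serre theory. Applied to our setting, if $\rho_i$ is faithful then $\Gamma$ acts faithfully on $T_i$; were $\rho_i(\Gamma)$ also discrete, $\Gamma$ would be free, contradicting the hypothesis, so any faithful $\rho_i$ must have indiscrete image.

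Next I would show that at least one $\rho_i$ is faithful. If all $K_i := \ker \rho_i$ were nontrivial, then walking down the chain $K_1 \supseteq K_1 \cap K_2 \supseteq \cdots \supseteq \bigcap_i K_i = 1$ one picks the index $j$ at which $L := K_1 \cap \cdots \cap K_j$ is still nontrivial but $L \cap K_{j+1} = 1$. Both $L$ and $K_{j+1}$ are normal in $\Gamma$, so $[L, K_{j+1}] \subset L \cap K_{j+1} = 1$, meaning $L$ centralizes $K_{j+1}$. Since centralizers of nontrivial elements in a non-elementary torsion-free hyperbolic group are infinite cyclic, $L$ lies in an infinite cyclic subgroup of $\Gamma$ and is itself cyclic; but a nontrivial cyclic normal subgroup forces $\Gamma$ to be virtually cyclic (hence elementary, hence cyclic, hence free) by the standard centralizer argument, contradicting the non-freeness of $\Gamma$.

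The heart of the proof is the moreover statement, which I would prove by induction on $n - r$, with trivial base case $n = r$. For the inductive step it suffices to check that whenever $\rho^{(m)} := \rho_1 \times \cdots \times \rho_m$ is discrete and faithful with $\rho_m$ not faithful (so $K_m \ne \{1\}$), the projection $\rho^{(m-1)}$ is again discrete (faithfulness being automatic from $\rho_1$ being faithful). Assume for contradiction a sequence of distinct $\gamma_\ell \in \Gamma \setminus \{1\}$ with $\rho^{(m-1)}(\gamma_\ell) \to 1$, and fix any $k \in K_m \setminus \{1\}$. Normality of $K_m$ gives $\gamma_\ell k \gamma_\ell^{-1} \in K_m$, and continuity of conjugation yields $\rho^{(m-1)}(\gamma_\ell k \gamma_\ell^{-1}) \to \rho^{(m-1)}(k)$. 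The key observation is that $\rho^{(m-1)}(K_m)$ is a discrete subgroup of $G_1 \times \cdots \times G_{m-1}$: it is the image under coordinate projection of $\rho^{(m)}(K_m)$, which lies in the closed slice of the discrete $\rho^{(m)}(\Gamma)$ where the $m$th coordinate is trivial. Hence the convergent sequence above is eventually constant, and injectivity of $\rho^{(m-1)}$ then forces $\gamma_\ell \in Z_\Gamma(k)$ for large $\ell$. Writing $Z_\Gamma(k) = \langle z \rangle$, $\gamma_\ell = z^{a_\ell}$, and $k = z^b$ with $b \ne 0$, the relation $\rho_m(z)^b = \rho_m(k) = 1$ splits into three cases, each yielding a contradiction: if $\rho_m(z)$ has infinite order then $b = 0$; if $\rho_m(z) = 1$ then $\gamma_\ell \in K_m$ and $\rho^{(m-1)}(\gamma_\ell) \to 1$ in the discrete group $\rho^{(m-1)}(K_m)$, forcing $\gamma_\ell = 1$; and if $\rho_m(z)$ has finite order $d \ge 2$, then after passing to a subsequence with $\rho_m(\gamma_\ell)$ constant, $\rho^{(m)}(\gamma_\ell)$ converges in the closed discrete $\rho^{(m)}(\Gamma)$ to some $(1, \rho_m(z)^c)$, which must equal $\rho^{(m)}(1) = 1$ by injectivity of $\rho^{(m-1)}$, reducing again to $\gamma_\ell \in K_m$.

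The main obstacle is the discreteness claim itself, since in general projections of discrete subgroups of a locally compact group need not be discrete; the trick that makes it work is conjugation by a nontrivial element of the dropped kernel $K_m$, which converts an alleged failure of discreteness into elements centralizing a nontrivial normal subgroup of $\Gamma$, after which the cyclic structure of centralizers in the torsion-free hyperbolic group $\Gamma$ interacts rigidly with $\rho_m(k) = 1$ to force a contradiction. Once the moreover is in hand, the first assertion follows at once: $r \ge 1$ by the second step, and $r = 1$ would make $\rho_1$ both discrete (from moreover) and indiscrete (from the first step), so $r \ge 2$.
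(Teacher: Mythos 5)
Your proof is correct, but the engine driving it is genuinely different from the paper's. The paper's key mechanism is geometric: it shows (via ping-pong) that nontrivial normal subgroups of $\Gamma$ intersect nontrivially, so $K=\bigcap_{i>r}\ker\rho_i$ is nontrivial, and then uses a pigeonhole argument on the sphere of radius $d_{T_i}(p_i,x\cdot p_i)$ in the finite-valence tree $T_i$ to manufacture a nontrivial element $[x,y^k]\in K\cap\Delta_i$, where $\Delta_i$ is a vertex stabilizer; such an element has a global fixed point in $X$, contradicting discreteness. The first assertion is obtained directly by ruling out $r=1$, and the ``moreover'' by running the same construction against a point stabilizer for $\rho_1\times\cdots\times\rho_r$. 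You instead prove the ``moreover'' first, by a purely topological-group argument: conjugating a fixed $k\in K_m\setminus\{1\}$ by an alleged indiscreteness sequence, observing that $\rho^{(m-1)}(K_m)$ inherits discreteness from the slice $\rho^{(m)}(\Gamma)\cap(G_1\times\cdots\times G_{m-1}\times\{1\})$, and then exploiting that centralizers in a torsion-free non-elementary hyperbolic group are infinite cyclic; you then deduce $r\ge 2$ as a corollary. Your route buys generality in the target: the inductive step never uses the tree structure, so it applies verbatim to discrete faithful representations into products of arbitrary (first-countable Hausdorff) topological groups, with finite valence entering only through Serre's theorem in the ``faithful $\Rightarrow$ indiscrete'' step; it also avoids the paper's passage to a finite-index subgroup and the degenerate-case bookkeeping in the commutator construction. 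The trade-off is that you lean harder on the fine structure of torsion-free hyperbolic groups (cyclic centralizers, absence of cyclic normal subgroups), whereas the paper's pigeonhole argument is the one that, per its closing remark, adapts more readily to hyperbolic groups with torsion and to rank-one lattices. Both sets of facts you invoke are standard, and I see no gap.
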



Before embarking upon the proof, we prove Corollary \ref{NoEmbed} from the introduction:

\begin{proof}
Consider a discrete and faithful action of $F_i$ on a tree $T_i$. Then the composition
\[
\Gamma \to \prod F_i \to \prod \mathrm{Aut}(T_i)
\]
is discrete and faithful, which contradicts Theorem \ref{NoAction}.
\end{proof}

We will repeatedly use the following theorem of Serre:

\begin{thm}
A torsion free, discrete group of isometries of a (single) tree is free.
\end{thm}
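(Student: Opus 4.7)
The plan is to reduce Serre's theorem to the standard Bass--Serre theory statement that a group acting freely on a tree is free, by using the discreteness and torsion-free hypotheses to upgrade the action to a free one.

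First, I would translate the hypothesis "discrete group of isometries" into a statement about stabilizers. Since in the intended application $T$ has finite valence, $\Aut(T)$ with the compact-open topology is a locally compact group in which the stabilizer of any vertex is a compact open subgroup. A discrete subgroup $\Gamma \subset \Aut(T)$ therefore has finite intersection with every vertex stabilizer, i.e.\ the stabilizer $\Gamma_v$ of any vertex $v$ of $T$ is finite. Because $\Gamma$ is torsion free, any finite subgroup of $\Gamma$ is trivial, so $\Gamma_v = \{1\}$ for every vertex. A similar argument applies to edges: an isometry of $T$ stabilizing a (geometric) edge either fixes both endpoints, in which case it is trivial by the previous sentence, or swaps them, in which case it is an involution, contradicting torsion-freeness. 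Thus $\Gamma$ acts freely on $T$, in the strongest sense (no nontrivial element fixes any simplex).

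Second, I would invoke the standard consequence of Bass--Serre theory (or covering space theory for graphs): a free action of a group on a tree exhibits the tree as the universal cover of the quotient graph $T/\Gamma$, and identifies $\Gamma$ with $\pi_1(T/\Gamma)$. The fundamental group of any connected graph is free of rank equal to one plus the first Betti number of the graph: one chooses a spanning subtree and obtains a free basis from the complementary edges. Hence $\Gamma$ is free.

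The only real content is Step 1, the passage from discreteness to a free action; once stabilizers are known to be trivial, the conclusion is formal. The main potential obstacle is therefore being precise about what "discrete" means in this context; but since the ambient trees in the paper all have finite valence, $\Aut(T)$ is locally compact and the argument above goes through without subtlety. (In fact, even if one only assumes a properly discontinuous action on $T$ viewed as a topological space, vertex stabilizers are finite by the usual definition of proper discontinuity applied to a neighborhood of the vertex, so the same conclusion holds.) No assumption on the rank of $\Gamma$ is needed, and the resulting free group may be of infinite rank if $T/\Gamma$ has infinitely many independent loops.
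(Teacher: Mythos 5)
Your proof is correct. The paper itself offers no proof of this statement---it is quoted as a known theorem of Serre and used as a black box---so there is nothing to compare against except the standard argument, which is exactly what you give: discreteness plus local finiteness makes vertex stabilizers finite, torsion-freeness makes them (and edge stabilizers, including possible inversions) trivial, and then the Nielsen--Schreier/Bass--Serre fact that a group acting freely on a tree is free finishes the job. You are also right to flag that ``discrete'' needs the finite-valence hypothesis (or proper discontinuity) to yield finite vertex stabilizers; since every tree in the paper has bounded valence, this is harmless in context, and your handling of edge inversions as involutions is the correct way to rule out the one remaining obstruction to a genuinely free simplicial action.
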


\noindent In the proof of the theorem, we will also use the following lemma.

\begin{lem}\label{NIHyperbolic}
Let $\Gamma$ be a torsion-free hyperbolic group. Then any two nontrival normal subgroups of $\Gamma$ have nontrivial intersection.
\end{lem}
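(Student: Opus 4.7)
The plan is to argue by contradiction, using the well-known fact that centralizers of nontrivial elements in torsion-free hyperbolic groups are infinite cyclic. First I would dispense with the elementary cases: if $\Gamma$ is trivial there is nothing to prove, and if $\Gamma\cong\ZZ$ then any two nontrivial subgroups have finite index and their intersection is clearly nontrivial. So I may assume $\Gamma$ is non-elementary (in particular not virtually cyclic).

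Suppose for contradiction that $N_1$ and $N_2$ are nontrivial normal subgroups with $N_1\cap N_2=\{1\}$. Since both are normal, the commutator subgroup $[N_1,N_2]$ lies in $N_1\cap N_2$, so $[N_1,N_2]=1$; that is, every element of $N_1$ commutes with every element of $N_2$. Fix any nontrivial $g\in N_2$. Then $N_1$ is contained in the centralizer $C_\Gamma(g)$. Because $\Gamma$ is torsion-free hyperbolic and $g$ has infinite order, $C_\Gamma(g)$ is infinite cyclic, hence $N_1$ is infinite cyclic. By symmetry, $N_2$ is also infinite cyclic.

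Now let $a$ be a generator of $N_1$. Normality of $N_1$ gives a homomorphism $\Gamma\to\Aut(N_1)=\{\pm1\}$ sending $\gamma$ to $\pm1$ according as $\gamma a\gamma^{-1}=a^{\pm1}$. Its kernel $\Gamma_0$ has index at most $2$ in $\Gamma$ and is contained in $C_\Gamma(a)$, which (as above) is infinite cyclic. Thus $\Gamma$ contains an infinite cyclic subgroup of index at most $2$ and is therefore virtually cyclic, contradicting the assumption that $\Gamma$ is non-elementary.

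The main obstacle is really just invoking the right black boxes in the right order; there is no clever construction to find. The only point where one must be careful is the reduction to the non-elementary case, so that the centralizer statement and the ``virtually cyclic implies elementary'' dichotomy both apply cleanly. Once that setup is in place, the proof is a two-line application of the fact that centralizers are cyclic.
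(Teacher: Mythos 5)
Your proof is correct, but it takes a genuinely different route from the paper's. The paper argues directly and positively: given nontrivial $x\in K$ and $y\in L$, either some $x^n=y^m$ (giving a common nontrivial element at once) or, by ping-pong, suitable powers $x^n,y^m$ generate a free group of rank two, whence $1\neq[x^n,y^m]\in K\cap L$ by normality. You instead argue by contradiction: trivial intersection forces $[N_1,N_2]=1$, so $N_1$ sits inside the centralizer of a nontrivial element, which is infinite cyclic in a torsion-free hyperbolic group; normality of an infinite cyclic subgroup then forces $\Gamma$ to be virtually cyclic, contradicting non-elementarity (and you handle the elementary cases separately, which is a point the paper glosses over). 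Both arguments lean on standard black boxes (Tits-alternative ping-pong versus cyclic centralizers). The paper's version has the practical advantage of \emph{exhibiting} a nontrivial element of the intersection as a commutator of powers; that explicit construction is reused verbatim in the proof of Theorem \ref{NoAction}, where one needs an element of $K\cap\Delta_i$ built from a commutator $[x,y^k]$. Your softer existence argument proves the lemma cleanly but would not substitute for that later step. As a side benefit, your argument establishes the slightly stronger intermediate fact that a non-elementary torsion-free hyperbolic group admits no nontrivial abelian normal subgroup.
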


\begin{proof}
Let $K$ and $L$ be normal subgroups of $\Gamma$. Pick $x \in K$ and $y \in L$ nontrivial elements, which we can assume to be distinct, else we would be done. In fact, we can assume that $x^n \neq y^m$ for any $n,m \in \Z$. Then it is a consequence of ping-pong in hyperbolic groups that there are positive $n,m$ such that $\langle x^n, y^m \rangle$ is a free group of rank two. However, we also have
\[
1 \neq [x^n, y^m] \in K \cap L,
\]
so the intersection of $K$ and $L$ is nontrivial.
\end{proof}

We now prove Theorem \ref{NoAction}.

\begin{proof}[Proof of Theorem \ref{NoAction}]
Suppose that $\rho_1, \dots, \rho_r$ are faithful, $r \ge 0$, and that $\rho_{r + 1}, \dots, \rho_n$ are not. We also assume that $r < n$, else we would be done. Let $K_i$ be the kernel of $\rho_i$. Lemma \ref{NIHyperbolic} implies that
\[
K = \bigcap_{i = r+1}^n K_i
\]
is a nontrivial normal subgroup of $\Gamma$. In particular, we see that $r \ge 1$, since otherwise $\rho$ would have a nontrivial kernel.

Perhaps after replacing $\Gamma$ with a subgroup of finite index, we can assume that there is a vertex $p_i \in T_i$ with nontrivial stabilizer $\Delta_i \in \Gamma$ for $1 \le i \le r$. Indeed, $\Gamma$ is not free, so the image of the faithful representation $\rho_i$ is indiscrete. It follows that $\Gamma$ has a fixed point on $T_i$, which we can assume to be a vertex after passage to a finite index subgroup.

Consider $x \in K$ and $y \in \Delta_i$. Our goal is to find an element in $K \cap \Delta_i$, so we can assume $x$ and $y$ do not lie in a cyclic subgroup.  As in the lemma above, using ping-pong and passing to powers of $x$ and $y$, we may assume they generate a nonabelian free group.   Then
\[
[x, y^k] = x (y^k x^{-1} y^{-k}) \in K.
\]
for every $k \in \Z$. We claim that there is some $k \ge 1$ such that we also have $[x, y^k] \in \Delta_i$, so then $[x, y^k] \in K \cap \Delta_i$. To prove this, consider the action of $\Gamma$ on $T_i$ induced by $\rho_i$, where we have
\[
(y^k x^{-1} y^{-k}) \cdot p_i = (y^k x^{-1}) \cdot p_i.
\]
Since $y^k$ fixes $p_i$, the points $\{(y^k x^{-1}) \cdot p_i\}$ are all vertices of $T_i$ at distance $d_{T_i}(p_i, x \cdot p_i)$ from $p_i$. There are only finitely many such points, so there must be $k_1, k_2 \in \Z$ with $k_1 \neq k_2$ such that
\[
(y^{k_1} x^{-1}) \cdot p_i = (y^{k_2} x^{-1}) \cdot p_i,
\]
which implies that
\[
(y^{k_1 - k_2} x^{-1}) \cdot p_i = x^{-1} \cdot p_i,
\]
and hence that
\[
x y^k x^{-1} \cdot p_i = p_i
\]
with $k = k_1 - k_2$, and it follows that $[x, y^k] \in \Delta_i$, as claimed.  As we assumed that $x$ and $y$ generate a free subgroup, we also know $[x,y^k] \neq 1$.


Now suppose that $r = 1$. Then there is exactly one $i$ such that $\rho_i$ is faithful and we only have $\Delta_i$ for $i = 1$. Then the element $[x, y^k]$ constructed above (or $x^n$ in the degenerate case where $x$ and $y$ have a common power) has a global fixed point
\[
p = (p_1, \dots, p_n) \in X,
\]
where $p_i$ is any point on $T_i$ for $2 \le i \le n$. This proves that $r \ge 2$, i.e., that at least two $\rho_i$ are faithful, and hence indiscrete.

Now consider the representation
\[
\rho_1 \times \cdots \rho_r : \Gamma \to G_1 \times \cdots \times G_r.
\]
It is evidently faithful, since each $\rho_i$ is faithful by assumption. We must prove that the image is discrete. If not, then we could find $y \in \Gamma$ with a fixed point
\[
p = (p_1, \dots, p_r) \in T_1 \times \cdots \times T_r.
\]
Let $\Delta$ be the stabilizer of $p$ in $\Gamma$. Then the above argument applied to $\Delta$ in place of $\Delta_i$ produces a nontrivial element $\gamma \in \Delta \cap K$. However, $\gamma$ then fixes the point
\[
(p_1, \dots, p_r, p_{r+1}, \dots, p_n) \in T_1 \times \cdots \times T_n,
\]
where $p_i \in T_i$ is arbitrary for $i \ge r + 1$. This violates discreteness of $\rho$, which shows that $\rho_1 \times \cdots \times \rho_r$ must be discrete. This completes the proof of the theorem.
\end{proof}



\medskip
\noindent
\textbf{Remark.}
These arguments apply to a much wider class of groups. For example, Theorem \ref{NoAction} holds for Gromov hyperbolic groups with torsion but with trivial center. In nonuniform lattices in rank one Lie groups, one finds an appropriate loxodromic element $x \in K$ (e.g., using the fact that the limit set of $K$ is the limit set of $\Gamma$), and uses the fact that a loxodromic element is centralized only by the maximal cyclic subgroup of $\Gamma$ containing it, to prove the theorem with a variant of the above argument. Instead of showing that $\langle x, y \rangle$ is free of rank one or two, one must argue with ping-pong that it is either cyclic or that one can pass to powers such that $\langle x^n, y^m \rangle$ is free of rank two, and be careful of elliptic and parabolic elements.  For higher rank lattices, there is an entire literature that shows much stronger and more general results than Theorem \ref{NoAction}.


\end{document}